\newcommand\rank{{\operatorname{rank}}}
\newcommand\R{{\mathbf{R}}}
\renewcommand\P{{\mathbf{P}}}
\newcommand\eps{\varepsilon}
\newcommand\Ba{{\mathbf a}}
\newcommand\Be{{\mathbf e}}
\newcommand\Bh{{\mathbf h}}
\newcommand\Br{{\mathbf r}}
\newcommand\Bu{{\mathbf u}}
\newcommand\Bv{{\mathbf v}}
\newcommand\Bx{{\mathbf x}}
\newcommand\BG{{\mathbf G}}
\newcommand\BH{{\mathbf H}}
\newcommand\ep{\epsilon}
\theoremstyle{plain}
  \newtheorem{theorem}[subsection]{Theorem}
  \newtheorem{conjecture}[subsection]{Conjecture}
  \newtheorem{lemma}[subsection]{Lemma}
  \newtheorem{corollary}[subsection]{Corollary}
\theoremstyle{remark}
  \newtheorem{claim}[subsection]{Claim}
\theoremstyle{definition}
\begin{document}

\title[Random matrices of equal row-sums]{On the singularity of random combinatorial matrices}

\author{}
\address{}
\email{}
\subjclass[2000]{}

\maketitle

\begin{abstract}
It is shown that a random $(0,1)$ matrix whose rows are independent random vectors of exactly $n/2$ zero components is non-singular with probability $1-O(n^{-C})$ for any $C>0$. The proof uses a non-standard inverse-type Littlewood-Offord result. 
\end{abstract}

\section{Introduction}\label{section:introduction}

Let $A_n$ denote a random $n$ by $n$ matrix, whose entries are iid Bernoulli random variables. A classical result of Koml\'os \cite{B,K} shows 

$$\P(A_n  \mbox{ is singular })=O(n^{-1/2}).$$ 

By considering the event that two rows or two columns of $A_n$ are equal (up to a sign), it is clear that 

$$\P(A_n  \mbox{ is singular }) \ge (1+o(1))n^2 2^{1-n}.$$ 

It has been conjectured by many researchers that in fact this bound is best possible.

\begin{conjecture}\label{conjecture:1}

$$\P(A_n  \mbox{ is singular })=(\frac{1}{2}+o(1))^n.$$

\end{conjecture}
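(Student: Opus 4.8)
The plan is to prove the matching upper bound $\P(A_n\text{ is singular})\le(\tfrac12+o(1))^n$; together with the lower bound $n^2 2^{1-n}$ recorded above this gives the conjectured asymptotic. Since $A_n$ is singular exactly when some $v\in S^{n-1}$ lies in its kernel, the task is to bound the probability that the unit sphere meets the random kernel, and the idea is to partition $S^{n-1}$ by the arithmetic structure of $v$. Following the Rudelson--Vershynin dichotomy I would first split off the \emph{compressible} vectors --- those within Euclidean distance $\rho_0$ of a $\delta_0 n$-sparse vector, for small absolute constants $\delta_0,\rho_0$ --- from the \emph{incompressible} ones, and then stratify each class dyadically by a parameter measuring how close $v$ is to a sparse or low-denominator lattice point.

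For the compressible part a first-moment bound suffices. On a $k$-element support one takes an $\eps$-net of the unit ball of cardinality $(C/\eps)^k$, so the total number of net directions is $\le\binom nk (C/\eps)^k=e^{o(n)}$ once $\delta_0$ is small; and for a fixed direction supported on $k$ coordinates, $\P(\langle R_1,v\rangle=0)=O(k^{-1/2})$ by Erd\H{o}s--Littlewood--Offord unless $v$ is essentially supported on two coordinates. Raising to the $n$-th power (the rows are independent) and optimizing over $k$ and the net, the maximum comes from $k=2$ with $v\propto e_i\pm e_j$ --- the ``equal or opposite columns'' event --- and reproduces the lower-bound term $(1+o(1))n^2 2^{1-n}$; every other compressible stratum contributes $(\tfrac12+o(1))^n$ or less. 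Linking the net scale $\eps$ to the relevant invertibility threshold is the only delicate bookkeeping here.

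The incompressible part is the crux. If $A_n v=0$ with $v$ incompressible then a positive fraction of its coordinates obey $|v_i|\gtrsim n^{-1/2}$, and the Rudelson--Vershynin reduction gives $\P(\exists\,\text{incompressible }v:A_n v=0)\lesssim\tfrac1n\sum_{i=1}^n\P(R_i\in H_i)$ with $H_i=\mathrm{span}(R_j:j\ne i)$. Conditioning on the other rows, $H_i$ is generically a hyperplane $w^{\perp}$ with $w=w^{(i)}$ independent of $R_i$, and $\P(R_i\in H_i\mid w)$ equals the point probability $\P(\sum_j\xi_j w_j=0)$ for iid Bernoulli $\xi_j$, which is at most the L\'{e}vy concentration function $\mathcal L(w,0)$. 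So everything reduces to an \emph{inverse Littlewood--Offord statement for the random normal $w$}: one must show the event ``$w$ is arithmetically structured'' is rare enough that, reweighted by its anti-concentration, it never beats $(\tfrac12+o(1))^n$. Concretely, for each dyadic level $p$ one wants (a) a counting bound of the form ``$\{w\in S^{n-1}:\mathcal L(w,0)\ge p\}$ has a net of size $\le(cp)^{-n}e^{o(n)}$'', and (b) the bound $\P(\text{a fixed net vector is a kernel normal of }A_n)\le p^{\,n-o(n)}$ from the $n$ independent orthogonality constraints; the product over the admissible range of $p$ must be arranged to peak at $p=\tfrac12$, the value of $\mathcal L(w,0)$ attained by $w\propto e_i\pm e_j$.

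The main obstacle is precisely that counting bound \emph{with the sharp constant}. Classical inverse Littlewood--Offord theorems (Hal\'{a}sz, and the more recent inverse literature) already show that structured vectors are exponentially rare, but only with a suboptimal base, which would yield $(c+o(1))^n$ for some $c>1/2$; extracting the constant $1/2$ needs a genuinely combinatorial argument --- a random-rounding/discretization scheme applied to the normal vector, together with a tensorization step showing each coordinate contributes a factor at most $2$ to the count-versus-concentration trade-off --- rather than an appeal to an off-the-shelf inverse theorem. Once this is in place, a union bound over $i\in[n]$, over the polynomially many dyadic strata, and over the compressible/incompressible split shows the term $n^2 2^{1-n}$ dominates, giving $\P(A_n\text{ is singular})=(\tfrac12+o(1))^n$.
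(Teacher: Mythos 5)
This statement is a \emph{conjecture} in the paper, not a theorem: the paper offers no proof of it, and the best upper bound it records is the $O((1/\sqrt{2})^n)$ of Bourgain, Vu and Wood, strictly weaker than the conjectured $(\frac12+o(1))^n$. So there is no paper proof to compare against; the only question is whether your outline actually closes the gap between $(1/\sqrt2)^n$ and $(1/2)^n$. It does not. You correctly describe the standard architecture (compressible/incompressible splitting, reduction to anticoncentration of $\langle R_i, w\rangle$ for a random normal $w$, an inverse Littlewood--Offord counting bound for structured normals), and you correctly locate the difficulty: the counting bound with the \emph{sharp} base $1/2$. But you then state explicitly that the available inverse theorems only yield a suboptimal constant $c>1/2$ and that a ``genuinely combinatorial argument'' is needed, without supplying one. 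That missing step is the entire content of the conjecture; everything else in your sketch was already known at the time and yields exactly the published $(c)^n$ bounds with $c>1/2$.

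Two further technical points. First, for \emph{exact} singularity an $\eps$-net of the sphere does not interact well with the event $\langle R_1,v\rangle=0$: exact orthogonality is not stable under moving $v$ to a nearby net point, so the first-moment bound on the compressible class must be run either over integer normal vectors of controlled height (which exist because $A_n$ has integer entries) or via a quantitative bound on $\|A_nv\|$ together with a separate argument converting smallness back to exact vanishing; as written, the net step is not valid. Second, the tensorization heuristic that ``each coordinate contributes a factor at most $2$ to the count-versus-concentration trade-off'' is precisely what fails in all known arguments --- the loss in passing from the concentration level $p$ to the cardinality of a net for its level set is exactly where the constant degrades --- so calling the remaining work ``bookkeeping'' understates the obstruction. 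What you have is a reasonable research program, not a proof.
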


In a breakthrough paper, Kahn, Koml\'os and Szemer\'edi \cite{KKSz} substantially improved the upper bound to 

$$\P(A_n  \mbox{ is singular })= O(.999^n).$$ 

Another significant improvement is due to Tao and Vu \cite{TVsing} to obtain the bound  

$$\P(A_n  \mbox{ is singular }) = O((3/4)^n).$$ 

The most recent record is due to Bourgain, Vu and Wood \cite{BVW}, who improved it to $O((1/\sqrt{2})^n)$.

All the proofs use a result of Erd\H{o}s and Littlewood-Offord in one or another way. We will discuss this result in details in Section \ref{section:method}. 

Another popular model of random matrices is that of random symmetric matrices. This is one of the simplest models that has non-trivial correlations between the matrix entries. Let $M_n$ denote a random symmetric $n$ by $n$ matrix, whose upper diagonal entries are iid Bernoulli random variables. Despite its obvious similarity to the non-symmetric case $A_n$, less is known concerning the singularity bound for $M_n$.
 
The question to determine whether the singular probability of $M_n$ tends to zero together with $n$ was first posed by Weiss in the early nineties. This question had been open until a recent breakthrough paper by Costello, Tao and Vu \cite{CTV}, who showed $\P(M_n$ is singular$ )= n^{-1/8+o(1)}$. Although the bound $O(n^{-1/8+o(1)})$ can be improved further by applying the more recent inequalities from \cite{C}, it seems that their method cannot give any bound better than $n^{-1/2+o(1)}$. In \cite{Ng-Duke}, the author of this note was able to improve the bound to $O(n^{-C})$, for any $C$. The best current bound is due to Vershynin \cite{V} who shows  $\P(M_n$ is singular$ )= \exp(-n^c)$, for some small positive constant $c$. 

The common feature of the two random matrix ensembles $A_n$ and $M_n$ is that the entries in each row or column are independent. Our main focus is on a simple model where this is not the case.   

Let $n$ be an even number, and let $Q_n$ be a random $(0,1)$ matrix whose rows are independent vector of exactly $n/2$ zero components. This matrix model resembles that of $A_n$ where the entries are iid Bernoulli random variables which take value $0$ and $1$ with probability $1/2$. One can also view $Q_n$ as a non-symmetric version of the adjacency matrix $R_{n,n/2}$ of a random $n/2$-regular graph. 

Estimating the second largest eigenvalue of the adjacency matrices $R_{n,d}$ of random $d$-regular graphs is a well-known problem in combinatorics and theoretical computer science. However, much less is known about the singularity of such random matrix model. It has been conjectured by Vu that 

\vskip .2in

\begin{conjecture}\cite{V} Assume that $d\ge 3$ is a number which may depend on $n$, then $R_{n,d}$, the adjacency matrix of a $d$-regular graph, is non-singular almost surely.
\end{conjecture}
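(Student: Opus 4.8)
The plan is to prove that $R_{n,d}$ has trivial kernel with probability $1-o(1)$ -- and, one hopes, with probability $1-O(n^{-c})$ for some $c=c(d)>0$ -- by ruling out nonzero $v\in\R^n$ with $R_{n,d}v=0$. I would generate the graph $G$ through the configuration model ($n$ buckets of $d$ half-edges, a uniformly random perfect matching on the $dn$ half-edges, conditioned on the resulting multigraph being simple -- an event of probability $\exp(-\Theta(d^2))$), so that for bounded $d$ it is enough to prove $\P(\det R_{n,d}=0)=o(1)$ in the unconditioned model. Since $R_{n,d}$ is symmetric with $R_{n,d}\mathbf 1=d\mathbf 1$ and $d\ne 0$, every kernel vector is orthogonal to $\mathbf 1$, so $\sum_i v_i=0$; after normalizing $\|v\|_2=1$ I split the analysis according to how spread $v$ is, calling $v$ \emph{compressible} if it lies within $\ell_2$-distance $\rho$ of a vector supported on at most $\delta n$ coordinates, and \emph{incompressible} otherwise, for suitable small constants $\rho,\delta=\delta(d)$.

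\emph{Compressible vectors.} Here one shows $R_{n,d}$ is bounded below, $\inf\{\|R_{n,d}v\|_2:\|v\|_2=1,\ v\text{ compressible}\}\ge c>0$ whp, which forbids a compressible kernel vector. The combinatorial heart is that small vertex sets cannot obstruct injectivity: if $v$ is (essentially) supported on a set $S$ with $|S|=s\le\delta n$, then typically a constant fraction of the vertices $u\notin S$ have \emph{exactly one} neighbour $w$ in $S$, whence $(R_{n,d}v)_u\approx v_w$, so $R_{n,d}v$ cannot be small unless $v$ is already small on $S$. The required statement -- every nonempty $S$ with $|S|\le\delta n$ has $\gtrsim s$ such vertices, equivalently $e(S)$ plus the excess external multiplicity of the $ds-2e(S)$ half-edges leaving $S$ is well below $\tfrac{d}{2}s$ -- is a routine first-moment (local-sparsity) computation in the configuration model, and for $\delta$ small it yields $\P=O(n^{-c})$ on this range. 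This is also where the hypothesis $d\ge 3$ is used: a random $2$-regular graph typically has a cycle of length divisible by $4$ and is then singular.

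\emph{Incompressible vectors.} Now $|v_i|\asymp n^{-1/2}$ on a linear fraction of the coordinates and $\sum_i v_i=0$. Fix an $e^{-Cn}$-net $\CN$ of incompressible vectors; it suffices to show $\P(\|R_{n,d}v\|_2\le e^{-C'n})\le e^{-cn}$ for each fixed $v\in\CN$ and then union bound. The obstacle to the usual leave-one-out scheme is that in the regular model each row is \emph{determined} by the others, so one cannot expose $n-1$ rows and keep one fresh. Instead I would resample locally: condition on the matching outside the half-edges of a random set of $O(1)$ buckets, so the conditional law of the remaining matching is uniform, and then apply \emph{switchings} -- replace two matched pairs $(x,y),(z,w)$ by $(x,w),(z,y)$. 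Each switching changes $(R_{n,d}v)_u$ at $O(1)$ vertices $u$ by amounts of the form $\pm(v_i-v_j)$; running many conditionally independent switchings, the event $R_{n,d}v\approx 0$ forces each of these structured signed sums to be tiny, and an anticoncentration bound for them supplies the exponential gain. The randomness driving these sums is that of \emph{which} half-edges get swapped, not of i.i.d.\ signs, so the Littlewood-Offord input must be an inverse-type statement adapted to this combinatorial randomness -- exactly the non-standard inverse Littlewood-Offord result developed in this paper for $Q_n$; its inverse form is what permits the conclusion that a failure of anticoncentration would pin $v$ to an essentially low-dimensional sublattice, contradicting incompressibility, once the degenerate graphs on which all switchings are ``ineffective'' have themselves been excluded by a first-moment count.

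\emph{The main difficulty.} The genuinely hard regime is small fixed $d$, above all $d=3$: the graph is sparse, there is no Kim--Vu sandwiching by Erd\H os--R\'enyi graphs $G(n,p_\pm)$ with $p_\pm\sim d/n$ to fall back on, and each switching touches only $O(1)$ coordinates of $v$, so accumulating the anticoncentration demands a careful multi-scale argument -- localize $v$ to a block on which it remains spread and avoids collapsing onto a coarse lattice, guarantee that a positive proportion of switchings are ``effective'' (touching two coordinates $i,j$ with $v_i\ne v_j$ inside that block), and control the conditional dependence introduced by the degree constraints. Marrying the sparse-regime invertibility machinery with the inverse Littlewood-Offord input of the present paper is where the real work lies; for $d$ growing with $n$ the situation is comparatively standard, since one can either sandwich between slightly sparser and denser Erd\H os--R\'enyi matrices or run the net argument with switchings that each move $\Theta(d)$ coordinates -- and the non-symmetric $d=n/2$ analogue is precisely the matrix $Q_n$ studied here.
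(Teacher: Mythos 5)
The statement you are addressing is a \emph{conjecture} (attributed to Vu), not a theorem of this paper: the paper does not prove it, and its main result (Theorem \ref{theorem:main}) establishes only the much weaker relaxation in which the symmetry constraint is dropped and $d=n/2$, i.e.\ the matrix $Q_n$ whose rows are \emph{independent} $(0,1)$ vectors with exactly $n/2$ zeros. What you have written is accordingly a research programme rather than a proof, and you say so yourself: the paragraph labelled ``the main difficulty'' explicitly defers the case of small fixed $d$ (``where the real work lies''), which is precisely the content of the conjecture. As it stands, nothing is actually established for any value of $d$, and there is no proof in the paper against which to compare your route.

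The concrete gaps are these. First, the incompressible case hinges on an anticoncentration estimate for sums of increments $\pm(v_i-v_j)$ driven by random switchings; no such estimate is stated, let alone proved, and the inverse Littlewood--Offord results of this paper (Theorems \ref{theorem:ILOa} and \ref{theorem:ILOb}) concern the row model of $Q_n$ --- a uniformly random $(0,1)$ vector with a prescribed number of zeros --- not the switching randomness of a uniform perfect matching; transporting them is a substantial piece of work, not a citation. Second, for the union bound over the net to close, the per-vector probability must beat the net cardinality $e^{Cn}$; with each switching moving only $O(1)$ coordinates and only $O(dn)$ switchings available, obtaining an $e^{-cn}$ bound for fixed $d$ is exactly the open difficulty, and you give no accounting of the exponents. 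Third, the claim that conditioning on the matching outside $O(1)$ buckets leaves the remainder uniform holds in the unconditioned configuration model but interacts with the conditioning on simplicity, and the compressible-vector expansion estimate is asserted as ``routine'' while for fixed small $d$ the relevant bad events have probability only polynomially small, so the constants genuinely matter. The architecture you describe is reasonable and resembles strategies that have since been pursued in the literature, but every step that separates the conjecture from known results is left unexecuted, so the proposal cannot be accepted as a proof.
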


It has been shown (for instance in \cite{TVcir}) that the singularity probability plays a crucial role in proving certain local limit laws involving matrix parameters. The main difficulties of the above conjecture come from two primary constraints: the matrix must be symmetric, and each row vector must have exactly $n-d$ zero entries. Our main goal is to relax the symmetry constraint, working with matrices whose rows are independent $(0,1)$ vectors of exactly $n-d$ zero entries. Although our method may work for a wide range of $d$, we will focus on the case $d=n/2$ only, showing that such random matrix is non-singular almost surely.

\begin{theorem}[Main result]\label{theorem:main} 
For any $C>0$ we have 

$$\P(Q_n \mbox{ is singular }) = O(n^{-C}),$$ 

where the implied constant depends on $C$.
\end{theorem}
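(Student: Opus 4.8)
The plan is to follow the now-standard three-regime strategy for singularity problems, adapted to the combinatorial constraint that each row has exactly $n/2$ ones. Write $Q_n = (q_{ij})$, let $R_i$ be its rows, and observe that the all-ones vector $\mathbf{1}$ is annihilated by $Q_n - \frac{1}{2}J$ in a sense that organizes the analysis: each row $R_i$ lies in the affine hyperplane $\{x : \sum_j x_j = n/2\}$. I would condition on the first $n-1$ rows spanning a hyperplane $H$ with unit normal $\bv$, and bound $\P(R_n \cdot \bv = 0)$ via an inverse Littlewood-Offord statement tailored to random vectors uniform on the slice of the cube with exactly $n/2$ ones. The first step is to show that with probability $1 - O(n^{-C})$ the normal vector $\bv$ is ``unstructured'' — its coefficients do not lie in a generalized arithmetic progression of small size — and the second step is the small-ball bound: for such $\bv$, $\sup_a \P(R_n \cdot \bv = a) = O(n^{-C})$, where the randomness in $R_n$ is the uniform choice of an $n/2$-subset.

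For the partition into regimes I would classify the putative null vector $\bw$ (a vector with $Q_n \bw = 0$) by the structure of its entries. Since $Q_n \mathbf{1}$ has all coordinates equal to $n/2$, we may always normalize $\bw \perp \mathbf{1}$, which is convenient. The compressible / highly-structured case — $\bw$ close to sparse, or close to a vector with few distinct coordinate values — is handled by a union bound over a net, using that for a fixed such $\bw$, each row satisfies $R_i \cdot \bw = 0$ with probability at most, say, $n^{-1/2}$ (a basic Erd\H{o}s--Littlewood--Offord estimate on the slice, which holds because the row is a random $n/2$-subset and $\bw$ has at least two distinct values among a positive fraction of coordinates), and these $n$ events are independent across rows. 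For incompressible $\bw$ one argues that a typical row kills $\bw$ with probability $O(n^{-1/2})$, so $\P(\exists \text{ incompressible null } \bw)$ is small unless many rows are ``bad''; this feeds into the conditioning argument above, where the real work lies.

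The heart of the matter, and the step I expect to be the main obstacle, is the non-standard inverse Littlewood-Offord theorem promised in the abstract. The usual inverse theorems (Tao--Vu, Nguyen--Vu) concern $\sum \eps_i \bw_i$ with independent $\pm 1$ weights $\eps_i$; here the ``weights'' are the indicators of a uniformly random $n/2$-element subset, which are \emph{not} independent (they satisfy $\sum_i \eta_i = n/2$), so one must either (i) pass to a conditioned version of the independent-Bernoulli model and control the conditioning on $\{\sum \eta_i = n/2\}$ — an event of probability $\Theta(n^{-1/2})$ — paying only polynomial factors, or (ii) prove a genuinely new inverse result for the slice. I would pursue route (i): write the row as $n/2$ coordinates chosen uniformly, couple with i.i.d.\ Bernoulli$(1/2)$ entries, and use a local central limit theorem for $\sum \eta_i$ to transfer small-ball bounds at polynomial cost; then the independent-case inverse theorem says that if $\sup_a \P(\sum \eps_i w_i = a) \ge n^{-C}$ then all but $o(n)$ of the $w_i$ lie in a GAP of rank $O(1)$ and size $n^{O(C)}$, and such $\bw$ can be absorbed into the structured regime and eliminated by a counting bound over all such GAPs (there are only $n^{O(C)}$ of them, while a fixed structured $\bw$ survives all $n$ rows with probability at most $(n^{-1/2})^{n}$, say, after a preliminary reduction guaranteeing each row is genuinely random relative to $\bw$). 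Balancing the number of structured vectors against this survival probability, and carefully handling the $\Theta(n^{-1/2})$ conditioning losses at each of the $O(n)$ independent rows (so that they do not accumulate to something worse than $n^{-C}$), is the delicate bookkeeping that makes the $d = n/2$ case work and is where I would concentrate the technical effort.
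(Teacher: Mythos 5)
Your overall architecture (inverse step for the slice via transference to the i.i.d.\ Bernoulli model at a $\sqrt{n}$ cost, then a counting step over structured normal vectors) is the same in spirit as the paper's, and route (i) of your transference is exactly the relation $\rho(A)=\Omega(\rho^\ast(A)/\sqrt{n})$ used there. However, there are two genuine gaps. First, the normalization $\Bw\perp\mathbf{1}$ is not available: if $Q_n\Bw=0$ and $c\neq 0$ then $Q_n(\Bw-c\mathbf{1})=-c(n/2)\mathbf{1}\neq 0$, so you cannot project away the $\mathbf{1}$-direction, and this matters precisely because the slice model degenerates on near-constant vectors. For $\Bw=\mathbf{1}$ one has $\P(\Br\cdot\Bw=n/2)=1$, and for a vector with $n-n^{1/2+\ep}$ equal coordinates the row-wise concentration probability is of constant order (e.g.\ $\approx 1/2$ for $(1,\dots,1,2)$), not $O(n^{-1/2})$ as your compressible-case bound asserts. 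The paper isolates exactly this regime (Case 1 of the counting step) and beats it with a bespoke per-row bound of $3/4+\ep$ (Claim \ref{claim:1}) against binomial entropy factors $\binom{n}{n_0}\binom{n-1}{n-n_0-1}$; without an argument of this kind your union bound over the ``few distinct values'' net does not close.

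Second, the counting of structured null vectors is not correct as stated. The entries of the normal vector are a priori arbitrary reals, so ``there are only $n^{O(C)}$ GAPs'' has no meaning until one shows the entries (and the GAP generators) may be taken rational of height $n^{O_{C,\ep}(n^\ep)}$; this is the content of the rational commensurability Lemma \ref{lemma:rational} (exploiting orthogonality to rows with bounded integer entries) together with the rank-reduction Lemma \ref{lemma:fullrank}, and it is indispensable before any enumeration. Moreover, what must be counted is not GAPs but vectors with all but $n^{1/2+\ep}$ entries in a GAP, of which there are roughly $\big((\rho^\ast)^{-1}/n^{\ep}\big)^{n}$ — exponentially many — while a fixed structured vector survives a single row with probability as large as $\rho^\ast$ (that is what being structured means), not $n^{-1/2}$. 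The proof must balance $\big((\rho^\ast)^{-1}/n^{\ep}\big)^{n}$ candidates against $(\rho^\ast)^{m}$ survival with $m=n-O(\log n)$, and the saving factor $n^{-\ep n}$ comes from the refined inverse theorem with rank at least $2$ (Theorem \ref{theorem:ILOb}, proved by a translation trick) applied with $n'=n^{1/2+\ep}$, together with a dyadic decomposition in $\rho^\ast$. Your bookkeeping with $n^{O(C)}$ structures and $(n^{-1/2})^{n}$ survival does not reflect this tension and, as written, would not yield the theorem.
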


By considering the event that two rows of $Q_n$ are equal, it is clear that $Q_n$ is singular with probability at least $(\frac{1}{2}+o(1))^n$. We conjecture that this is also the right bound.

\begin{conjecture}
$$\P(Q_n \mbox{ is singular }) = (\frac{1}{2}+o(1))^n.$$ 
\end{conjecture}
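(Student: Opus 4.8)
The lower bound is immediate and it is worth recording it in the form the rest of the plan will use: the event that rows $1$ and $2$ of $Q_n$ coincide has probability $\binom{n}{n/2}^{-1} = (\frac12+o(1))^n$ by Stirling, and dually $v = e_i - e_j$ is a null vector precisely when columns $i,j$ agree, while a single row $r$ (the indicator of a uniform $n/2$-subset) satisfies $\P(r\cdot(e_i-e_j)=0) = \P(r_i = r_j) = \frac12 + O(1/n)$, and similarly $\P(r\cdot e_i = 0) = \frac12$. So the content of the conjecture is the matching upper bound, and the plan is to run the Kahn--Koml\'os--Szemer\'edi / Tao--Vu / Bourgain--Vu--Wood scheme, now in the fixed-row-sum setting and with the sharpest available replacement and counting refinements.

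Since $Q_n$ is an integer matrix, singularity forces a primitive null vector $v \in \Z^n\setminus\{0\}$; working on the unit sphere through an $\eps$-net, it suffices to bound, for each candidate $v$, the probability $p(v)^n$ that all $n$ independent rows are orthogonal to $v$, where $p(v) := \P(r\cdot v = 0)$. The key small-ball input is an Erd\H os--Littlewood--Offord estimate for the sampling-without-replacement sum $\sum_{i\in S}v_i$ with $|S|=n/2$: for $v$ that are neither essentially sparse nor close to a very short arithmetic progression one gets $p(v) = O(n^{-1/2})$, so the "spread-out" part contributes only $O(n^{-n/2})$, which is far below $(\frac12+o(1))^n$ and harmless. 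The entire difficulty is the thin family of highly structured $v$ — in particular $v = e_i$, $v = e_i - e_j$, and small perturbations thereof — on which $p(v)$ reaches $\frac12+o(1)$ and which correspond exactly to the trivial singularity events (a zero column, two equal columns, two equal rows).

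The core of the argument is the dichotomy furnished by the non-standard inverse Littlewood--Offord theorem of this paper: if $p(v)\ge\rho$ then $v$ lies in a generalized arithmetic progression of small rank and small volume, equivalently is close to a vector with very few distinct coordinate values. One then stratifies the net points by the dyadic level of $\rho$, bounds the number $N(\rho)$ of net points in each stratum via the inverse theorem, and estimates $\P(Q_n \mbox{ singular}) \le \sum_\rho N(\rho)\,\rho^n + O(n^{-n/2})$. For the final constant to be $\frac12$ one needs this sum to be dominated by the coarsest stratum, where $\rho = \frac12 + o(1)$ and $N(\rho) = n^{O(1)}$ (the $O(n^2)$ pairs $(i,j)$ together with the $O(n)$ coordinate vectors), so that the right-hand side is $n^{O(1)}(\frac12+o(1))^n = (\frac12+o(1))^n$.

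The main obstacle is precisely this balancing act, and I do not expect it can yet be pushed all the way. Even for $A_n$, where the coordinates of a row are independent, the same scheme has so far produced only $O((1/\sqrt2)^n)$: the loss occurs in the inverse-theorem counting, where structured-but-nontrivial vectors are over-represented relative to the small-ball gain. For $Q_n$ matters are harder still, since the coordinates of a row are negatively correlated by the fixed sum, so one must either build a genuinely sharp inverse Littlewood--Offord theory for sampling without replacement, or decouple the dependence — for instance by conditioning on the column sums of $Q_n$, or via a switching / configuration-model coupling that makes the rows behave like independent $\mathrm{Bernoulli}(1/2)$ vectors up to an error absorbable in the $o(1)$ — and then invoke a sharp iid statement that is itself still open. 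A complete proof of the conjecture would, in my view, need a new combinatorial input beyond the Littlewood--Offord machinery developed here; absent that, this circle of ideas seems to cap out at a bound of the shape $(1/\sqrt2+o(1))^n$, i.e. $2^{-n/2+o(n)}$, well short of the conjectured truth.
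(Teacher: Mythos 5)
This statement is a conjecture that the paper itself leaves open: the paper proves only the polynomial bound $\P(Q_n \mbox{ singular})=O(n^{-C})$ (Theorem \ref{theorem:main}) and offers no proof of the exponential bound, so there is no proof in the paper to compare yours against. Your lower bound is correct and is exactly the paper's one-line justification: two prescribed rows coincide with probability $\binom{n}{n/2}^{-1}=(\frac12+o(1))^n$ (and the column events $r\cdot \Be_i=0$, $r\cdot(\Be_i-\Be_j)=0$ give the same order). That part is fine.

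For the upper bound, what you have written is a strategy outline, not a proof, and you say so yourself; I agree with your own assessment of where it breaks. Concretely: (i) the reduction "singularity forces a primitive integer null vector, pass to an $\eps$-net and bound $p(v)^n$" already needs care, since the rows are not independent coordinates and the union over net points is exactly where all the loss occurs; (ii) the inverse Littlewood--Offord theorems available here (Theorems \ref{theorem:ILOa}, \ref{theorem:ILOb}) lose polynomial factors and produce GAP counts $N(\rho)$ that overwhelm $\rho^n$ for intermediate $\rho$, which is why the paper only extracts $n^{-C}$; and (iii) even granting a perfect transfer to the iid setting, the sharpest counting arguments of this type (Kahn--Koml\'os--Szemer\'edi, Tao--Vu, Bourgain--Vu--Wood) stall at $O((1/\sqrt2)^n)$, short of the conjectured $(\frac12+o(1))^n$. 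So the sum $\sum_\rho N(\rho)\rho^n$ is not shown to be dominated by the coarsest stratum, and no step of your plan supplies the missing input. The proposal should be read as a correct identification of the lower bound plus an honest statement that the matching upper bound remains open; it does not establish the conjecture.
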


The paper is organized as follows. In the next section we discuss the result of Erd\H{o}s and Littlewood-Offord and its variants in details. After providing further necessary ingredients in Section \ref{section:lemmas}, we complete the proof of Theorem \ref{theorem:main} in Section \ref{section:proof}.

{\bf Notation.} 

For an event $A$, we use the subscript $\P_{\Bx}(A)$ to emphasize that the probability under consideration is taking according to the random vector $\Bx$.

For $1\le s \le n$, we denote by $\Be_s$ the unit vector $(0,\dots,0,1,0,\dots,0)$, where all but the $s$-th component are zero.

\section{The main tools}\label{section:method}

Let $x_i, i=1,\dots, n$ be iid Bernoulli random variables, taking values $\pm 1$ with probability $1/2$. Given a set $A$ of $n$ real number  $a_1, \dots, a_n$, the \emph{concentration probability} of $A$ is defined to be 

$$\rho(A) := \sup_{a} \P\Big(\sum_{i=1}^n a_ix_i =a\Big).$$

Motivated by their study of random polynomials, in the 1940s Littlewood and Offord \cite{LO} raised the question of bounding $\rho(A)$. They showed that if the $a_i$ are nonzero then $\rho(A)=O(n^{-1/2}\log n)$. Shortly after the Littlewood-Offord paper, Erd\H{o}s \cite{E} gave a combinatorial proof of the refinement $\rho(A)\le \binom{n}{n/2}2^{-n} $. 

Since the pioneer results of Erd\H{o}s and Littlewood and Offord, there has been an impressive wave of reasearch to improve the inequality by imposing new assumptions on the $a_i$'s. These improvements include the works of Erd\H{o}s and Moser \cite{EM}, Hal\'asz \cite{H}, Katona\cite{Kat}, Kleitman \cite{K}, S\'ark\"ozy and Szemer\'edi \cite{SSz}, and Stanley \cite{Stan}. 

Recently, Tao and Vu have brought a different view to the problem. Instead of following the classical settings, they tried to find the underlying reason as to why $\rho(A)$ is large, say $\rho(A)\ge n^{-C}$ for some $C>0$. This created a new direction called {\it inverse Littlewood-Offord problem}.

Note that the set  $A$ has $2^n$ subsums, and $\rho({A})\ge n^{-C}$ means that at least $2^n n^{-C}$ among these take the same
value. This observation suggests that  the set should have a very rich additive structure. To determine this structure, let us recall an important concept in Additive Combinatorics, \emph{generalized arithmetic progressions} (GAPs). 

A subset $P$ of $\R$ is a \emph{GAP of rank $r$} if it can be expressed as in the form 

$$P= \Big\{g_0+ m_1g_1 + \dots +m_r g_r| N_i \le m_i \le N_i'\Big\}.$$ 

The numbers $g_i$ are the \emph{generators } of $P$. The numbers $N_i,N_i'$ are the {\it dimensions} of $P$. We say that $P$ is \emph{proper} if every element of $P$ can be written as such a linear combination of the generators in a unique way. If $-N_i=N_i'$ for all $i$ and if $g_0=0$, we say that $P$ is {\it symmetric}.

\vskip .1in

Assume that $P$ is a proper symmetric GAP of rank $r=O(1)$ and size $n^{O(1)}$, and assume that all the elements of $A$ are contained in $P$. Then, by the additive property $|nP|\le n^r|P|$ of $P$, we easily have  $\rho(V) = \Omega(n^{-O(1)})$. 

This example shows that, if the elements of $A$ belong to a symmetric proper GAP with a small rank and small cardinality, then
$\rho(A)$ is indeed very large. A few years ago, Tao and Vu \cite{TVstrong,TVinverse} proved several results showing that this is essentially the only reason. 

{\it Assume that $\rho(A)\ge n^{-C}$ for some $C>0$, then most of the elements of $A$ belong to a symmetric proper GAP of bounded rank $O(1)$ and of small size $n^{O(1)}$.}

Due to the applications they had, the sharpness of the inverse results was not addressed. In a joint work with Vu  we are able to give an optimal version.

\begin{theorem}[Inverse Littlewood-Offord result]\cite[Theorem 2.5]{NgV-Adv}\label{theorem:ILO} 
Let $\eps<1$ and $C$ be positive constants. Assume that

$$\rho (A)  \ge  n^{-C}. $$

Then, for any $n^\ep \le n' \le n$, there exists a proper symmetric GAP $P$ of rank $r=O_{\ep,C}(1)$ that contains all but $n'$ elements of $A$ (counting multiplicity), where 

$$|P|=O_{C,\ep}(\rho^{-1}/{n'}^{r/2}).$$ 
\end{theorem}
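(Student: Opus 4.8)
\emph{Proof plan.} The plan is to pass to the Fourier side, extract from the hypothesis a large level set of an associated trigonometric sum, show that this level set is additively structured, transport the structure back to $A$ by duality, and finally sharpen the size bound by a local central limit estimate combined with the fact that $\rho$ does not increase under deletion of elements. First I would invoke the continuous (Ess\'een-type) form of the Erd\H{o}s--Littlewood--Offord inequality together with the elementary bound $|\cos \pi t| \le \exp(-c\|t\|^2)$, where $\|\cdot\|$ is the distance to $\Z$, to get
\[
\rho(A) \ll \int_{-1/2}^{1/2} \prod_{j=1}^{n} |\cos(\pi a_j \xi)|\, d\xi \le \int_{-1/2}^{1/2} \exp(-c\, f(\xi))\, d\xi, \qquad f(\xi) := \sum_{j=1}^{n} \|a_j\xi\|^2 .
\]
Splitting off the range $f(\xi) > (2C/c)\log n$, whose contribution is $\ll n^{-2C}\ll\rho$, shows that the level set $S_m:=\{\xi\in[-1/2,1/2]:f(\xi)\le m\}$ satisfies $|S_{m_0}|\gg\rho$ for some $m_0=O_C(\log n)$. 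Since $\|x+y\|\le\|x\|+\|y\|$ forces $S_m+S_m\subseteq S_{4m}$ while $|S_m|\le1$ for every $m$, a pigeonhole over the $\Theta(\log n)$ dyadic scales lying between $m_0$ and $n/8$ produces a scale $m_*$ with $|S_{4m_*}|=O_C(|S_{m_*}|)$; spreading the argument over logarithmically many scales in this way is precisely what keeps the doubling constant $O_C(1)$ rather than $n^{O(1)}$. Thus $S_{m_*}\subseteq[-1/2,1/2]$ has $|S_{m_*}|\gg\rho$ and doubling $O_C(1)$.

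Next I would apply a quantitative Freiman-type theorem for sets of small doubling on the line (a volume form of Freiman--Ruzsa): $S_{m_*}$ sits inside a proper symmetric GAP $Q$ of rank $r=O_{\eps,C}(1)$ and measure $O_C(|S_{m_*}|)$. A structural/duality argument in the spirit of the Tao--Vu inverse theorems then converts $Q$ into a proper symmetric GAP $P$ on the coefficient side, of the same rank $r=O_{\eps,C}(1)$ and size $|P|=O_C(|S_{m_*}|^{-1})=O_C(\rho^{-1})$, which contains all but at most $n'$ of the $a_j$ (counting multiplicity) --- the frequencies not represented by $Q$ are exactly those one throws into the exceptional set, and the constraint $n'\ge n^\eps$ is the room one needs for this. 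This already gives a non-optimal inverse statement with $|P|=O_C(\rho^{-1})$. To upgrade to $|P|=O_{\eps,C}(\rho^{-1}/(n')^{r/2})$ I would use that $\rho(A)\le\rho(A_0)$ whenever $A_0$ is obtained from $A$ by deleting elements: applying this to a sub-multiset $A_0$ consisting of $n'$ of the coefficients lying in $P$, and writing each such $a$ as $a=\sum_{i=1}^{r}c_i g_i$ in the generators $g_1,\dots,g_r$ of $P$, the sum $\sum_{a\in A_0}a\,x_a=\sum_{i=1}^{r}\big(\sum_{a\in A_0}c_i^{(a)}x_a\big)g_i$ is, by a local central limit theorem applied coordinatewise in $\Z^r$ and the properness of $P$, anti-concentrated with density $\ll(n')^{-r/2}/|P|$, provided $A_0$ is chosen non-degenerate in $P$ (again arranged by discarding at most $n'$ aligned frequencies). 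Hence $\rho\ll(n')^{-r/2}/|P|$, i.e.\ $|P|\ll\rho^{-1}/(n')^{r/2}$.

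The hard part will be keeping every estimate sharp at once. Running the Freiman step and the dualization with \emph{genuinely} bounded rank and only a constant-factor loss in size requires an efficient Freiman-type input and careful control of the properness of the GAPs at each stage (hence also the multi-scale pigeonhole of the first step, rather than a single level set, whose doubling could be as large as $\rho^{-1}=n^{O(1)}$). Most delicate is the final local central limit estimate: one must push the CLT error term provably below the main term $(n')^{-r/2}/|P|$ while \emph{simultaneously} guaranteeing that the set of ``degenerate'' frequencies removed so that the length-$n'$ partial sum equidistributes in $P$ never exceeds the budget $n'$ --- it is the interplay of these two requirements that pins down both the size $O(\rho^{-1}/(n')^{r/2})$ and the admissible range $n'\ge n^\eps$.
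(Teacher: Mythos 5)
First, note that this paper does not prove Theorem \ref{theorem:ILO} at all: it is imported verbatim from \cite{NgV-Adv} (Theorem 2.5 there) and used as a black box, so there is no internal proof to compare against; your sketch has to be judged as an attempted proof of the Nguyen--Vu optimal inverse theorem itself. Its overall architecture is the recognizable one (Ess\'een bound, level sets of $f(\xi)=\sum_j\|a_j\xi\|^2$, a pigeonhole over $\Theta(\log n)$ dyadic scales to get doubling $O_C(1)$, a Freiman-type containment, dualization back to the coefficients), and two of your ingredients are correct as stated: $S_m+S_m\subseteq S_{4m}$ together with the scale pigeonhole does yield a level set with bounded doubling, and the monotonicity $\rho(A)\le\rho(A_0)$ under deletion is valid by conditioning on the deleted coordinates.

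The gaps are in exactly the two places where the theorem's optimality lives. (a) The ``structural/duality argument in the spirit of Tao--Vu'' that converts the containing structure for the Fourier level set into a proper symmetric GAP containing all but $n'$ of the $a_j$ with only an $O_{C,\eps}(1)$ loss in size is left entirely as a black box; this is precisely the step where the earlier inverse theorems lose $n^{O(1)}$ factors, and controlling it with constant loss (and with the exceptional set genuinely bounded by $n'$, not merely by $n^{o(1)}\cdot$something) is the main content of \cite{NgV-Adv}, not a routine transfer. (b) The final upgrade to $|P|=O(\rho^{-1}/(n')^{r/2})$ does not follow from what you assume. Writing the chosen $n'$ elements as $a=\sum_i c_i^{(a)}g_i$, a local CLT gives at best $\rho(A_0)\ll\prod_{i=1}^r(\sqrt{n'}\,\sigma_i)^{-1}$, where $\sigma_i$ is the spread of the coefficients $c_i^{(a)}$ in the $i$-th direction. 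To conclude $\rho\ll (n')^{-r/2}/|P|$ you need $\sigma_i\gtrsim N_i'-N_i$ in \emph{every} direction, i.e.\ the selected elements must fill each side of the box proportionally; your hypothesis of ``non-degeneracy'' (full rank, as in Lemma \ref{lemma:fullrank}) only prevents the coefficients from lying in a hyperplane and is far weaker. If the $a_j$ occupy only a thin sub-box of $P$, your argument gives a bound much larger than $(n')^{-r/2}/|P|$, and the claimed size bound fails. Repairing this requires shrinking $P$ to a sub-GAP essentially filled by the $a_j$ while preserving properness, bounded rank, and the containment of all but $n'$ elements --- a quantitative strengthening of the rank-reduction step that your plan does not supply. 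So the proposal has the right general shape but omits the two arguments that distinguish the optimal theorem from its non-optimal predecessors.
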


The Erd\H{o}s and Littlewood-Offord inequality emerges in the singularity  problem of random matrices by the following simple observation: condition on the matrix $A_{n-1}$ of the first $n-1$ rows, the determinant of $A_n$ can be expressed as a linear form of its last row $\Bx=(x_1,\dots, x_n)$, 

\begin{equation}\label{eqn:det}
\det(A_n)=x_1a_1+\dots+x_na_n,
\end{equation}

where $a_1,\dots,a_n$ are the corresponding cofactors of the matrix $A_{n-1}$. Thus, if we can prove that most of the $a_i$ are non-zero (with respect to $A_{n-1}$), then it follows from the Erd\H{o}s and Littlewood-Offord inequality that $\P_{\Bx}(\det(A_n)=0)=O(n^{-1/2})$. This approach was initiated by Koml\'os. 

To prove Theorem \ref{theorem:main}, we reverse the approach of Koml\'os. First we express $\det(Q_n)$ as a linear form if its last row $\Bx=(x_1,\dots,x_n)$ 

$$\det(Q_n)=x_1a_1+\dots+x_na_n,$$

where $a_1,\dots,a_{n}$ are the cofactors of $Q_{n-1}$.

Roughly speaking, our method consists of two main steps.

\begin{itemize}
\item (Inverse Step) Assume that 

$$\rho^\ast(A):=\sup_a\P_\Bx(x_1a_1+\dots+a_{n}x_n=a)\ge n^{-C},$$ 

\noindent where the probability is taken uniformly over all $(0,1)$ tuples $(x_1,\dots,x_n)$ for which there are exactly $n/2$ zero components, then there is a strong structure among the cofactors $a_1,\dots,a_n$.
\vskip .2in
\item (Counting Step) With respect to $Q_{n-1}$, the event that there exists a strong structure among the $a_i$'s happens with negligible probability. 
\end{itemize}

For the rest of this section, we focus on the Inverse Step. Details of the Counting Step will be presented in Section \ref{section:proof}. 

First, by using the relation $\rho(A)=\Omega(\rho^{\ast}(A)/\sqrt{n})$, we can deduce directly from Theorem \ref{theorem:ILO} the following result.

\begin{theorem}[Inverse Littlewood-Offord result for $\rho^\ast(A)$, I]\label{theorem:ILOa}
Let $\eps<1$ and $C$ be positive constants. Assume that

$$\rho^\ast(A)  \ge  n^{-C}. $$

Then, for any $n^\ep \le n' \le n$, there exists a proper symmetric GAP $P$ of rank $r=O_{\ep,C}(1)$ that contains all but $n'$ elements of $A$ (counting multiplicity), where 

$$|P|=O_{C,\ep}\big((\rho^\ast(A))^{-1}\sqrt{n}/{n'}^{r/2}\big).$$ 
\end{theorem}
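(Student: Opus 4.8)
The plan is to deduce the statement directly from Theorem~\ref{theorem:ILO} by means of the comparison inequality $\rho(A)=\Omega(\rho^\ast(A)/\sqrt n)$ announced above, so the first task is to prove that inequality. I would realize the uniform-over-half-ones model as a Rademacher model conditioned on a single linear constraint: if $\xi_1,\dots,\xi_n$ are iid $\pm 1$ signs, then the vector $\bigl((1+\xi_1)/2,\dots,(1+\xi_n)/2\bigr)$ conditioned on $\xi_1+\dots+\xi_n=0$ is exactly uniform over the $(0,1)$ tuples with precisely $n/2$ zero components, and under this substitution $a_1x_1+\dots+a_nx_n=\tfrac12\sum_i a_i+\tfrac12\sum_i a_i\xi_i$. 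Hence for every real $a$,
$$\P_\Bx\Big(\sum_i a_ix_i=a\Big)=\frac{\P\big(\sum_i a_i\xi_i=2a-\sum_i a_i,\ \sum_i\xi_i=0\big)}{\P(\sum_i\xi_i=0)}\le\frac{\rho(A)}{\P(\sum_i\xi_i=0)}.$$
Since $\P(\sum_i\xi_i=0)=\binom{n}{n/2}2^{-n}=\Theta(n^{-1/2})$ by Stirling's formula, taking the supremum over $a$ yields $\rho^\ast(A)=O(\sqrt n\,\rho(A))$, that is, $\rho(A)=\Omega(\rho^\ast(A)/\sqrt n)$, and equivalently $\rho(A)^{-1}=O(\sqrt n\,\rho^\ast(A)^{-1})$.

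With this in hand the remainder is book-keeping. From $\rho^\ast(A)\ge n^{-C}$ and the comparison inequality I get $\rho(A)\ge c\,n^{-C-1/2}$ for a suitable absolute constant $c>0$, hence $\rho(A)\ge n^{-(C+1)}$ for all sufficiently large $n$ (the finitely many smaller $n$ being absorbed into the implied constants). Thus Theorem~\ref{theorem:ILO} applies with the constant $C':=C+1$ and the same $\eps$. It produces, for every $n^\eps\le n'\le n$, a proper symmetric GAP $P$ of rank $r=O_{\eps,C'}(1)=O_{\eps,C}(1)$ that contains all but $n'$ elements of $A$ counting multiplicity, with $|P|=O_{C',\eps}\big(\rho(A)^{-1}/{n'}^{r/2}\big)$. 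Substituting $\rho(A)^{-1}=O(\sqrt n\,\rho^\ast(A)^{-1})$ into this bound gives $|P|=O_{C,\eps}\big(\rho^\ast(A)^{-1}\sqrt n/{n'}^{r/2}\big)$, which is exactly the asserted estimate.

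I do not expect a genuine obstacle: the result is a formal corollary of Theorem~\ref{theorem:ILO}, and the only point requiring care is that the $\sqrt n$ loss in the comparison inequality degrades the exponent $C$ by $1/2$, so one must verify this is harmless — which it is, since the rank $r$ stays bounded purely in terms of $\eps$ and $C$, and the extra factor $\sqrt n$ in the cardinality bound is carried explicitly rather than hidden inside a power of $\rho^{-1}$. It is also worth recording explicitly (as in the setup of Theorem~\ref{theorem:ILO}) that $A$ is understood here as a multiset of $n$ real numbers, the $a_i$ being the cofactors in the intended application.
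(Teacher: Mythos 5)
Your proposal is correct and follows essentially the same route as the paper, which likewise deduces the theorem directly from Theorem~\ref{theorem:ILO} via the comparison $\rho(A)=\Omega(\rho^\ast(A)/\sqrt n)$; your conditioning argument (realizing the equal-row-sum model as Rademacher signs conditioned on $\sum_i\xi_i=0$, with $\P(\sum_i\xi_i=0)=\Theta(n^{-1/2})$) simply supplies the proof of that relation, which the paper uses without detail. The bookkeeping with $C':=C+1$ and the substitution $\rho(A)^{-1}=O(\sqrt n\,\rho^\ast(A)^{-1})$ is exactly what is needed.
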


%In some cases, the bound $\rho^{\ast}(A)^{-1}\sqrt{n}/{n'}^{r/2}$ in Theorem \ref{theorem:ILOa} is not quite useful. For instance if we let $A$ be the set $\{1,2,\dots,n\}$. Then it follows from heuristic that $\rho^\ast(A)=\Theta(n^{-3/2})$ . Thus, by applying Theorem \ref{theorem:ILOa} with $n'=\ep n$ to minimize the GAP's size, we obtain a GAP $P$ which contains almost all elements of $A$ and which has size $|P|=O(\rho^{\ast}(A)^{-1}\sqrt{n}/(\ep n)^{r/2})=O(n^2/n^{r/2})$. This means that there might exist a GAP of rank $r=1$ and size $O(n^{3/2})$ that contains most of the elements of $A$. However, this bound is not useful because there clearly exist symmetric proper GAPs of rank 1 and size $\Theta(n)$ that contain $A$.

Observe that if $A'$ is a translation of $A$, $A'=\{a_1+N,\dots,a_n+N\}$ for some $N$, then $\rho^\ast(A)=\rho^\ast(A')$. On the other hand, symmetric GAPs do not preserve under translation. This suggests that we might obtain a more economical version of Theorem \ref{theorem:ILOa} by passing to consider non-symmetric GAPs. 

\begin{theorem}[Inverse Littlewood-Offord result for $\rho^\ast$, II]\label{theorem:ILOb}
Let $\eps<1$ and $C$ be positive constants. Assume that $n^{\ep}\le n' < n$ and $A=\{a_1,\dots,a_n\}$ is a multiset for which there are no more than $n-n'-1$ elements taking the same value. Assume furthermore that

$$\rho^\ast(A)  \ge  n^{-C}. $$

Then, there exists a (not necessarily symmetric) proper GAP $P$ of rank $2\le r=O_{\ep,C}(1)$ that contains all but $n'$ elements of $A$ (counting multiplicity), where 

$$|P|=O_{C,\ep}\big((\rho^\ast(A))^{-1}\sqrt{n}/{n'}^{r/2}\big).$$ 

In particular,

$$|P|=O_{C,\ep}\big((\rho^\ast(A))^{-1}\sqrt{n}/n'\big).$$ 

\end{theorem}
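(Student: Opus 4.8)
The plan is to deduce Theorem~\ref{theorem:ILOb} from Theorem~\ref{theorem:ILOa}, exploiting the fact that $\rho^\ast$ is unchanged when $A$ is translated. I would first fix a most frequent value $a^\ast$ of the multiset $A$; by the non-concentration hypothesis its multiplicity is at most $n-n'-1$, so at least $n'+1$ elements of $A$ differ from $a^\ast$. Since $\rho^\ast(A-a^\ast)=\rho^\ast(A)\ge n^{-C}$, Theorem~\ref{theorem:ILOa} applied to $A-a^\ast$ (with the same $\eps$, $C$, $n'$) gives a proper \emph{symmetric} GAP $P'$ of rank $r=O_{\eps,C}(1)$ containing all but $n'$ elements of $A-a^\ast$, with $|P'|=O_{C,\eps}\big((\rho^\ast(A))^{-1}\sqrt n/{n'}^{r/2}\big)$. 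If $r\ge 2$ we are done: $P:=P'+a^\ast$ is proper (a translate of a proper GAP is proper), has rank $r\ge 2$, satisfies $|P|=|P'|$, and contains all but $n'$ elements of $A$; the ``in particular'' estimate follows at once since ${n'}^{r/2}\ge n'$ for $r\ge2$.

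The substantive case is $r=1$, i.e. $P'=\{mg:|m|\le N\}$ with $g\neq0$ (if $g=0$ then $A$ would be constant, against non-concentration), $2N+1=|P'|=O\big((\rho^\ast(A))^{-1}\sqrt n/\sqrt{n'}\big)$, and all but $n'$ elements of $A$ of the form $a^\ast+m_ag$, $|m_a|\le N$. Let $B$ be the multiset of these coordinates $m_a$. Two routine points: (i) $\rho^\ast(B)$ equals $\rho^\ast$ of the good part of $A$ (translate by $a^\ast$, dilate by $g$), and $\rho^\ast(B)\gtrsim\rho^\ast(A)$ — condition on the at most $n'$ coordinates of the random $(0,1)$-vector that lie over the bad elements of $A$, freeze their contribution, and compare the resulting (slightly unbalanced) Littlewood--Offord constant with the balanced one (this comparison needs a little care when $n'$ is a constant fraction of $n$); (ii) $B$ still has no value of multiplicity exceeding $n-n'-1$. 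Replacing $g$ by $g\cdot\gcd(B)$ we may assume the coordinate multiset has gcd $1$, and we apply Theorem~\ref{theorem:ILOa} once more to a translate of $B$ to the origin. If this returns a GAP of rank $\ge2$, then dilating back by $g$ and translating back by $a^\ast$ produces the required non-symmetric proper GAP of rank $\ge2$ and size $O\big((\rho^\ast(A))^{-1}\sqrt n/{n'}^{r/2}\big)$.

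It then remains to treat the terminal case, in which the second application again returns a rank-one GAP; since the coordinate multiset has gcd $1$, this GAP has generator $1$, so $B$ lies, up to at most $n'$ exceptions, inside an interval $I$ of length $O\big((\rho^\ast(A))^{-1}\sqrt n/\sqrt{n'}\big)$ — and, by construction, inside no proper GAP of substantially smaller size. The whole point is to improve this to a GAP of size $O\big((\rho^\ast(A))^{-1}\sqrt n/n'\big)$, i.e. to recover the extra factor $\sqrt{n'}$, and this is the step I expect to be the main obstacle. I would first dispose directly of the ``lumpy'' configurations — those in which most of $B$ is supported on a bounded number of values — since, together with the non-concentration hypothesis, these are already contained in a proper rank-two GAP of bounded size that one can dilate and translate back. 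In the remaining, genuinely spread, case the Littlewood--Offord sum $\sum_b b\,x_b$ is smooth enough to obey a sharper (Hal\'asz- or second-moment-type) concentration bound, which forces the effective support of $B$ into an interval of length $O\big((\rho^\ast(A))^{-1}\sqrt n/n'\big)$; refolding that interval into a proper rank-two GAP of the same order of size, and lifting back, finishes the proof. The delicate part is precisely this estimate and the case analysis around it; everything else is translation, dilation, and bookkeeping to keep the GAPs proper and of the right rank and size.
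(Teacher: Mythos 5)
Your first paragraph is on the right track and matches the paper's starting point (translation invariance of $\rho^\ast$, apply the symmetric inverse theorem, and note that rank $r\ge 2$ finishes the job, including the ``in particular'' bound). But the proposal has a genuine gap exactly where the theorem's content lies: the rank-one case. Translating by the most frequent value $a^\ast$ is a \emph{bounded} translation and buys nothing against rank one, so you are left having to upgrade an interval of length $O\big((\rho^\ast)^{-1}\sqrt n/\sqrt{n'}\big)$ to a GAP of size $O\big((\rho^\ast)^{-1}\sqrt n/n'\big)$. The Hal\'asz/second-moment improvement you invoke for the ``spread'' case is not a routine estimate you can cite; it is essentially the rank-sensitive part of the Nguyen--Vu inverse theorem itself, and you neither state it precisely nor prove it (you yourself flag it as the main obstacle). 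Likewise the comparison $\rho^\ast(B)\gtrsim\rho^\ast(A)$ after freezing the exceptional coordinates, and the ``lumpy'' case analysis, are sketched rather than carried out. As written, the argument does not prove the theorem.

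The missing idea in the paper is a one-line trick that makes the rank-one case vanish: translate $A$ not by $a^\ast$ but by an arbitrarily large $N$ (chosen after $A$, e.g.\ much larger than $n^{O(1)}\sum_i|a_i|$). Since $\rho^\ast(A+N)=\rho^\ast(A)$ and $\rho(A+N)=\Omega(\rho^\ast(A)/\sqrt n)\ge n^{-O(1)}$, Theorem \ref{theorem:ILO} gives a \emph{symmetric} proper GAP $Q$ of rank $r=O(1)$ and size $O\big((\rho^\ast)^{-1}\sqrt n/{n'}^{r/2}\big)$ containing all but $n'$ elements of $A+N$. If $Q$ had rank one with step $d$ and length $L$, then (using your own observation that the multiplicity hypothesis forces two distinct covered values) $d\le|a_{i_1}-a_{i_2}|\le 2\sum_i|a_i|$, while containing an element of size about $N$ forces $Ld\ge N-\sum_i|a_i|$ with $L\le|Q|=n^{O(1)}$ \emph{uniformly in} $N$; this is contradictory once $N$ is large, so $r\ge 2$ automatically, with the rank-dependent size bound already supplied by Theorem \ref{theorem:ILO}. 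Shifting $Q$ by $-N$ gives the desired non-symmetric proper GAP. So your plan can be repaired, but only by replacing the bounded translation with this large-$N$ translation; the second and third paragraphs of your proposal should then be deleted rather than completed.
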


Remark that the essential advantage of Theorem \ref{theorem:ILOa} over Theorem \ref{theorem:ILOb} is that the rank $r$ must be at least 2, which leads to a ''gain'' of a factor $\sqrt{n'}$ in the size of $|P|$.

\begin{proof}(of Theorem \ref{theorem:ILOb}) We consider a translation $A'=\{a_1',\dots,a_n'\}$ of $A$, where $a_i':=a_i+N$ and $N$ is chosen to be sufficiently large. It is clear that 
$$\rho^\ast(A')=\rho^\ast(A).$$

Consider the concentration probability of $A'$. Because 

$$\rho(A')=\Omega(\rho^\ast(A')/\sqrt{n})=\Omega(\rho^\ast(A)/\sqrt{n})= n^{-O(1)},$$ 

Theorem \ref{theorem:ILO} implies that there exists a symmetric GAP $Q$ of rank $r=O(1)$ and size $(\rho(A'))^{-1}/n'^{r/2}$ that contains all but $n'$ elements of $A'$. 

We now show that the rank $r$ of $Q$ must be at least 2. Assume otherwise that $Q$ has rank one. Let $d$ and $L$ be its step and length respectively. Because $Q$ contains at least two distinct elements $a_{i_1}$ and $a_{i_2}$, so 

$$d\le |a_{i_1}-a_{i_2}|\le 2\sum_i |a_i|.$$ 

On the other hand, we also have $Ld\ge |N+a_1|\ge N-\sum_i|a_i|$, and so 

$$d\ge (N-\sum_i|a_i|)/L = \Omega\big((N-\sum_i|a_i|)n^{-O(1)}\big).$$ 

This bound contradicts with the upper bound  $2\sum_i |a_i|$ if $N$ was chosen to be large enough, and so $Q$ must have rank at least 2.

Next, recall that $\rho(A')=\Omega(\rho^\ast(A')/\sqrt{n})=\Omega(\rho^\ast(A)/\sqrt{n})$. Thus,  

$$|Q|=O\big((\rho^\ast(A))^{-1}\sqrt{n}/n'^{r/2}\big).$$

To complete the proof, we shift $Q$ by $-N$ to obtain a GAP $P$ of the same rank and size which contains all but $n'$ elements of $A$. 
\end{proof}

We now draw two quick consequences of Theorem \ref{theorem:ILOb}. 

Our first result, which is similar to the inequality of Erd\H{o}s and Littlewood-Offord, asserts that as long as $A$ is not too trivial, $\rho^\ast(A)$ is small.

\begin{corollary}[Erd\H{o}s-Littlewood-Offord inequality for $\rho^\ast$]\label{cor:forward}
Let $\eps<1$ be a positive constant. Assume that $n^{1/2+\ep}\le n' \le n$ and $A=\{a_1,\dots,a_n\}$ is a multiset where there are no more than $n-n'-1$ elements taking the same value. Then we have 

$$\rho^\ast(A)=O_\ep(\sqrt{n}/n').$$ 
\end{corollary}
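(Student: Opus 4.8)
The plan is to read this off directly from Theorem \ref{theorem:ILOb}. First I would dispose of a trivial range: if $\rho^\ast(A) < n^{-1}$, then since $\sqrt{n}/n' \ge \sqrt{n}/n = n^{-1/2} > n^{-1}$ (using $n' \le n$), the claimed estimate $\rho^\ast(A) = O_\eps(\sqrt{n}/n')$ already holds. So I may assume $\rho^\ast(A) \ge n^{-1}$, which lets me invoke Theorem \ref{theorem:ILOb} with the constant $C = 1$ and the given $\eps$. Its hypotheses are met: $n^\eps \le n^{1/2+\eps} \le n'$; and the assumption that at most $n-n'-1$ elements of $A$ take any common value forces $n-n'-1 \ge 1$ (a nonempty multiset attains some value at least once), so that $n' \le n-2 < n$, while the multiplicity condition is exactly what Theorem \ref{theorem:ILOb} asks for.

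Theorem \ref{theorem:ILOb} then produces a proper GAP $P$ of rank $r \ge 2$, $r = O_\eps(1)$, that contains all but $n'$ of the elements of $A$ (counting multiplicity), and for which, in the ''in particular'' form of the conclusion (valid because $r \ge 2$), $|P| = O_\eps\big((\rho^\ast(A))^{-1}\sqrt{n}/n'\big)$. It remains to bound $|P|$ from below. Since $P$ contains at least $n - n' \ge 2$ of the $a_i$ counted with multiplicity, and since no value is attained by more than $n-n'-1$ of them, those elements cannot all be equal; hence $P$, viewed as a set, contains at least two distinct reals, i.e. $|P| \ge 2$. Combining the two inequalities, $2 \le |P| = O_\eps\big((\rho^\ast(A))^{-1}\sqrt{n}/n'\big)$, and rearranging gives $\rho^\ast(A) = O_\eps(\sqrt{n}/n')$, as required.

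I do not anticipate a genuine obstacle here: all the content is already packaged in Theorem \ref{theorem:ILOb}, and in particular in the fact (established there through the rank-one contradiction argument) that the GAP must have rank at least $2$, which is precisely what upgrades the trivial denominator $\sqrt{n'}$ to the full $n'$ and thereby yields the $\sqrt{n}/n'$ shape of the bound. The only points needing a moment of care are checking that the value of $n'$ and the multiplicity hypothesis place us within the scope of Theorem \ref{theorem:ILOb}, and the elementary observation that ''$A$ is not too trivial'' translates into ''$|P| \ge 2$''; both are routine.
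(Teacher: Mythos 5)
Your proposal is correct and follows essentially the same route as the paper: invoke Theorem \ref{theorem:ILOb} (whose rank-$\ge 2$ conclusion gives the $\sqrt{n}/n'$ shape), use the multiplicity hypothesis to force the containing GAP to have at least two distinct elements, and play the lower bound on $|P|$ against the upper bound $O_\eps\big((\rho^\ast(A))^{-1}\sqrt{n}/n'\big)$. The only differences are presentational — you rearrange directly instead of the paper's contradiction with a large threshold constant $C$, and you spell out the preliminary checks ($\rho^\ast(A)\ge n^{-1}$ via a trivial case split, and $n'\le n-2<n$) that the paper leaves implicit.
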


\begin{proof}(of Corollary \ref{cor:forward})
Assume that for some sufficiently large $C$

$$\rho^\ast(A)\ge C \sqrt{n}/n'.$$ 

Then, by Theorem \ref{theorem:ILOb}, there exists a GAP $P$ of size $|P|=O_\ep((\rho^\ast))^{-1}\sqrt{n}/n')$ which contains all but $n'$ elements of $a_i$. Because there are no more than $n-n'-1$ elements among $a_i$ taking the same value, $P$ must have size at least 2. On the other hand, the upper bound $|P|=O_\ep((\rho^\ast))^{-1}\sqrt{n}/n')=O_\ep(C^{-1})$ shows that $P$ can be forced to be empty if we choose $C$ to be sufficiently large depending on $\ep$. This is a contradiction, and so 

$$\rho^\ast(A)= O_\ep(\sqrt{n}/n').$$ 
\end{proof}

Improving earlier result of Erd\H{o}s and Moser \cite{EM}, S\'ak\"ozy and Szemer\'edi proved in \cite{SSz}  that if $a_i$ are distinct then $\rho(A)=O(n^{-3/2})$. Our next consequence shows a similar bound for $\rho^\ast(A)$. 

\begin{corollary}[S\'ark\"ozy-Szemer\'edi theorem for $\rho^\ast$]\label{cor:distinct}
Assume that $A=\{a_1,\dots,a_n\}$, where $a_i$ are distinct real numbers. Then we have

$$\rho^\ast(A):=O(n^{-3/2}).$$
\end{corollary}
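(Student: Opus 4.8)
The plan is to derive this from the inverse result Theorem~\ref{theorem:ILOb} by a contradiction argument, in the same spirit as the proof of Corollary~\ref{cor:forward}, but now exploiting two extra features that were not available there: the rank of the GAP produced by Theorem~\ref{theorem:ILOb} is at least $2$, and the distinctness of the $a_i$ forces that GAP to be large. So suppose, for the sake of contradiction, that $\rho^\ast(A) \ge K n^{-3/2}$ for a large absolute constant $K$ to be chosen, and for all $n$ larger than some absolute threshold (the finitely many remaining $n$ being absorbed into the implied constant of the conclusion).

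First I would apply Theorem~\ref{theorem:ILOb} with $\eps = 1/2$, $C = 3/2$, and $n' = \lfloor n/2 \rfloor$. Since the $a_i$ are distinct, each value is attained at most once, and $1 \le n - n' - 1$ for $n \ge 4$, so the hypothesis on multiplicities holds; also $n^{1/2} \le n' < n$. Hence there is a proper GAP $P$ of rank $2 \le r = O(1)$ that contains all but $n'$ of the $a_i$ (counting multiplicity), with
$$|P| = O\big((\rho^\ast(A))^{-1}\sqrt n / {n'}^{r/2}\big).$$

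Next I would combine the upper and lower bounds on $|P|$. Since $r \ge 2$ and $n' = \lfloor n/2 \rfloor$, we have ${n'}^{r/2} \ge n' = \Omega(n)$, so $|P| = O\big((\rho^\ast(A))^{-1}\sqrt n / n\big) = O\big((\rho^\ast(A))^{-1} n^{-1/2}\big)$. On the other hand $P$ contains all but $n'$ of the $a_i$, and these are distinct, so $|P| \ge n - n' = \Omega(n)$. Putting the two inequalities together gives $\Omega(n) \le |P| \le C'(\rho^\ast(A))^{-1} n^{-1/2}$ for an absolute constant $C'$, whence $\rho^\ast(A) \le C'' n^{-3/2}$. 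Choosing $K > C''$ contradicts the assumption $\rho^\ast(A) \ge K n^{-3/2}$, which proves $\rho^\ast(A) = O(n^{-3/2})$.

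There is no serious obstacle here once Theorem~\ref{theorem:ILOb} is available; the only point requiring care is the bookkeeping of which $n'$ to insert. The choice $n' \asymp n/2$ is exactly what balances the ``$n - n'$ distinct elements lie in $P$'' lower bound against the ``${n'}^{r/2}$ in the denominator'' upper bound, and it is the factor ${n'}^{r/2} \ge {n'}^1$ coming from $r \ge 2$ — rather than the weaker ${n'}^{1/2}$ one would get from a rank-one symmetric GAP — that upgrades the $O(n^{-1/2})$ of Corollary~\ref{cor:forward} to the desired $O(n^{-3/2})$.
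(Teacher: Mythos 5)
Your proposal is correct and follows essentially the same route as the paper: apply Theorem~\ref{theorem:ILOb} with $n'$ a constant fraction of $n$ (the paper uses $n'=\ep n$, you use $n'=\lfloor n/2\rfloor$), lower-bound $|P|$ by $\Omega(n)$ via distinctness, and upper-bound it by $O\big((\rho^\ast(A))^{-1}\sqrt n/n'\big)$ using $r\ge 2$, yielding the contradiction. Your bookkeeping of the parameters $\eps$, $C$ and of the multiplicity hypothesis is in fact slightly more careful than the paper's own write-up.
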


\begin{proof}(of Corollary \ref{cor:distinct})
Assume that for some sufficiently large $C$

$$\rho^\ast(A)\ge Cn^{-3/2}.$$ 

Then, Theorem \ref{theorem:ILOb} applied to $n'=\ep n$ implies that there exists a GAP $P$ of size $|P|=O_\ep((\rho^\ast))^{-1}\sqrt{n}/n')$ which contains all but $\ep n$ elements of $a_i$. Because the elements of $A$ are distinct, $P$ must have size at least $(1-\ep)n$. On the other hand, the upper bound $|P|=O_\ep((\rho^\ast))^{-1}\sqrt{n}/\ep n)=O_\ep(C^{-1}n)$ shows that $P$ has size much smaller than $\ep n$ once we choose $C$ to be sufficiently large depending on $\ep$. This is a contradiction, and so

$$\rho^\ast(A)=O(n^{-3/2}).$$ 
\end{proof}

\section{Further supporting lemmas}\label{section:lemmas}

\subsection{A rank reduction argument and the full rank assumption} This section provides a technical lemma we will need for later sections. Informally, it says that if we can find a proper GAP that contains a given set, then we can assume this containment is non-degenerate. More details are followed.  

Assume that $P=\{g_0+m_1g_1+\dots+m_rg_r | N_i\le m_i \le N_i'\}$ is a proper GAP, which contains a set $U=\{u_1,\dots. u_n\}$. 

We consider $P$ together with the map $\Phi: P \rightarrow \R^r$ which maps $g_0+m_1g_1+\dots+m_rg_r$ to $(m_1,\dots,m_r)$. Because $P$ is proper, this map is bijective. 

We know that $P$ contains $U$, but we do not know yet that $U$ is non-degenerate in $P$ in the sense that the set $\Phi(U)$ has full rank in $\R^{r}$. In the later case, we say $U$ {\it spans} P.

\begin{lemma}\label{lemma:fullrank}
Assume that $U$ is a subset of a proper GAP $P$ of size $r$, then there exists a proper  GAP $Q$ that contains $U$ such that the followings hold.

\begin{itemize}
\item $\rank(Q)\le r$ and $|Q|\le O_r(1)|P|$;

\vskip .1in

\item $U$ spans $Q$, that is, $\phi(U)$ has full rank in $\R^{\rank(Q)}$.
\end{itemize}

\end{lemma}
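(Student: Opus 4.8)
The plan is to prove Lemma \ref{lemma:fullrank} by a rank reduction argument: starting from $P$, if $U$ already spans $P$ we are done, and otherwise we show that $P$ can be replaced by a proper GAP of strictly smaller rank (up to a bounded blow-up of the size) that still contains $U$, and then iterate. Since the rank drops by at least $1$ at each step and starts at $r=O(1)$, the process terminates after $O(r)$ steps, and the total size blow-up is $O_r(1)$.

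\textbf{The key step: one rank reduction.} Suppose $\Phi(U)$ does not have full rank in $\R^r$, i.e. $\Phi(U)$ lies in some proper affine subspace (in fact, since $0 \in \Phi(U)$ only if $g_0 \in U$, we should work with the linear span of the differences, or translate so that one element of $U$ maps to the origin). Let $H$ be the linear hyperplane, or more precisely let $W = \operatorname{span}_{\R}\{\Phi(u_i) - \Phi(u_1)\}$ which has dimension $r' < r$. I would first find a lattice description: the set $\Lambda$ of all $(m_1,\dots,m_r) \in \Z^r$ with $g_0 + \sum m_i g_i \in \R\cdot(\text{something})$ — actually the cleanest route is to consider the subgroup $\Gamma = W \cap \Z^r$ of $\Z^r$, which is a free abelian group of rank $r'' \le r'$ (a primitive sublattice if we take $\Gamma = (W \cap \Z^r)$), pick a basis $\bw_1,\dots,\bw_{r''}$ of $\Gamma$, and define new generators $h_j := \sum_{i} (\bw_j)_i g_i$ for $j = 1,\dots, r''$. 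Then every $u_k = g_0 + \sum_i \Phi(u_k)_i g_i$ with $\Phi(u_k) - \Phi(u_1) \in W \cap \Z^r = \Gamma$ can be rewritten as $u_1 + \sum_j m_{kj} h_j$ for integers $m_{kj}$, so $U$ sits inside the GAP $Q_0 := \{ u_1 + \sum_j m_j h_j : |m_j| \le M_j \}$ for suitable bounds $M_j$. One checks $\rank(Q_0) = r'' < r$. The size bound follows because $\Phi$ restricted to the span is controlled: each $h_j$ is an integer combination of the $g_i$ with coefficients bounded in terms of $P$'s dimensions, and conversely; a Minkowski/geometry-of-numbers estimate gives $|Q_0| \le O_r(1)|P|$. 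Finally $Q_0$ need not be proper, but one invokes a standard fact (from Tao--Vu's book, on GAPs) that any GAP of rank $r''$ and size $s$ is contained in a proper GAP of rank at most $r''$ and size $O_{r''}(1)\cdot s$; apply this to get the proper $Q$.

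\textbf{Iteration and bookkeeping.} Call the above construction one round. If $U$ spans the current GAP, stop; otherwise apply a round to reduce the rank by at least one and multiply the size by at most a constant $K_r$ depending only on $r$. After at most $r$ rounds the rank has dropped to the point where $U$ necessarily spans (in the extreme case $\rank = 0$, trivially), and the accumulated size factor is at most $K_r^{\,r} = O_r(1)$. This yields a proper GAP $Q \supseteq U$ with $\rank(Q) \le r$, $|Q| \le O_r(1)|P|$, and $\Phi_Q(U)$ of full rank in $\R^{\rank(Q)}$, as claimed. (One should be slightly careful that "spans" is stated in terms of the map $\Phi$ attached to $Q$, not $P$; since each round produces an explicit new $\Phi$, this is automatic.)

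\textbf{Expected main obstacle.} The genuine content is the size estimate $|Q_0| \le O_r(1)|P|$ during a rank reduction: one must show that passing to the primitive sublattice $\Gamma = W \cap \Z^r$ and re-parametrizing does not inflate the number of lattice points. This is where a geometry-of-numbers input is needed — essentially that the image box $\Phi(P) \subset \R^r$ sliced by the rational subspace $W$ contains $\gtrsim_r |\Phi(P)|^{r''/r}$-ish many points is the wrong direction; rather one wants that the sub-GAP we build to cover $\Phi(U)$, namely the smallest box in the $\bw_j$-coordinates containing $\Phi(U) - \Phi(u_1)$, has volume $O_r(1)|\Phi(P)|$ because $\Phi(U) \subseteq \Phi(P) - \Phi(u_1)$ and a box of bounded dimension circumscribing a subset of a given box has comparable size. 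The properness-repair step is a black box citation and the termination count is immediate, so the lattice/volume comparison is the one place demanding care.
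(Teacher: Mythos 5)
Your overall skeleton is the same as the paper's: when $\Phi(U)$ is degenerate, reduce the rank, repair properness by citing the Tao--Vu embedding lemma, iterate at most $r$ times, and absorb an $O_r(1)$ size factor per round. The difference is in how a single reduction round is carried out, and that is where your argument has a genuine gap. You pass to the sublattice $\Gamma=W\cap\Z^r$ spanned by the differences, ``pick a basis $\bw_1,\dots,\bw_{r''}$'', and then claim that the smallest coordinate box (in that basis) containing $\Phi(U)-\Phi(u_1)$ has size $O_r(1)|P|$ because it circumscribes a subset of the original box. That justification fails: the coordinates of a point of $\Gamma$ with respect to an \emph{arbitrary} basis of $\Gamma$ can be enormous even when the point lies in a small box (take $r=3$, $\Gamma$ of rank $2$, and replace a good basis $\{\bw_1,\bw_2\}$ by $\{\bw_1,\bw_2+M\bw_1\}$ for huge $M$), so the circumscribing box in the new coordinates need not have size comparable to $|P|$. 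To make your route work you must choose a reduced basis and invoke a genuine geometry-of-numbers input (a discrete John-type theorem: the lattice points of a symmetric convex body are contained in a proper GAP of the same rank and comparable size), which you neither prove nor cite precisely -- you correctly flag this as the crux but do not close it.

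The paper avoids the issue entirely with a cheaper reduction: since $\Phi(U)$ lies in a hyperplane, there are integers $\alpha_1,\dots,\alpha_r$ with $\gcd$ one and $\alpha_1m_1+\dots+\alpha_rm_r=0$ on $\Phi(U)$; taking $w$ with $g_r=\alpha_r w$ and new generators $g_i'=g_i-\alpha_i w$ ($1\le i\le r-1$) gives a rank-$(r-1)$ GAP containing $U$ \emph{with the same coordinate ranges} $N_i\le m_i\le N_i'$, so its size is trivially at most that of $P$; the only loss is the $r^{Cr^3}$ factor from the properness lemma, and iterating kills one generator per round. If you replace your lattice-basis step by this one-relation elimination (or properly cite the Tao--Vu John-type theorem for lattice points in convex bodies), your proof is complete; as written, the key size estimate $|Q_0|\le O_r(1)|P|$ is unjustified.
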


To prove Theorem \ref{lemma:fullrank}, we will rely on the following lemma.

\begin{lemma}[Progressions lie inside proper progressions]\cite[Theorem 3.40]{TVbook}\label{lemma:embeding}
There is an absolute constant $C$ such that the following holds. Let $P$ be a GAP of rank $r$ in $\R$. Then there is a proper GAP $Q$ of rank at most $r$ containng $P$ and 

$$|Q|\le r^{Cr^3}|P|.$$ 

\end{lemma}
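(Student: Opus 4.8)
The plan is to reformulate ``properness'' as injectivity of a linear map on a box, and then to use the geometry of numbers to trade the given box for a better‑adapted one, at a controlled cost in volume.

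First the reductions. Translating a GAP changes neither its properness nor its size, so we may take $g_0=0$; and replacing $P$ by the symmetric GAP with the same generators and doubled dimensions costs only a factor $2^r$, so we may assume $P=\{\sum_{i=1}^r m_i g_i:\ |m_i|\le N_i\}$ with each $N_i\ge 1$ (a generator with $N_i=0$ is simply dropped). The subgroup of $\R$ generated by $g_1,\dots,g_r$ is free of some rank $s\le r$; fixing an isomorphism $\theta$ of it with $\Z^s$, we may regard the generators as elements of $\Z^s$ that span $\Z^s$, and write $P=\psi(B)$ where $\psi\colon \Z^r\to\Z^s$ is the surjection $e_i\mapsto g_i$ and $B=\{m\in\Z^r:\ |m_i|\le N_i\}$. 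Now $P$ is proper iff $\psi|_B$ is injective iff $\ker\psi\cap(B-B)=\{0\}$; if $s=r$ this is automatic and $Q=P$ works, so assume $s<r$.

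Next, the covering. Consider the zonotope $Z:=\sum_{i=1}^r [-N_i,N_i]\,g_i\subseteq\R^s$: it is a centrally symmetric convex body, it is full‑dimensional because the $g_i$ span $\R^s$, and one checks $\operatorname{conv}(P)=Z$ and $P\subseteq Z\cap\Z^s$. Since $g_1,\dots,g_r\in Z\cap\Z^s$ and these already span $\R^s$, the $s$ successive minima $\lambda_1\le\dots\le\lambda_s$ of $Z$ with respect to $\Z^s$ all satisfy $\lambda_j\le 1$. By Minkowski's theorem on successive minima together with the classical bound of Mahler on the existence of a reduced basis, there is a basis $w_1,\dots,w_s$ of $\Z^s$ with $\|w_j\|_Z\le \gamma_r\lambda_j$, where $\|\cdot\|_Z$ is the gauge of $Z$ and $\gamma_r$ depends only on $r$; choose $M_j:=\lceil \gamma_r'/\lambda_j\rceil$ with $\gamma_r'$ large enough (depending only on $r$) that $Z\subseteq \operatorname{conv}\{\sum_j n_j w_j:\ |n_j|\le M_j\}$. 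Then $Q_0:=\{\sum_{j=1}^s n_j w_j:\ |n_j|\le M_j\}$ is a GAP of rank $s\le r$ which is automatically proper, since $w_1,\dots,w_s$ are a basis of the full‑rank lattice $\Z^s$ and hence linearly independent over $\R$, so $\operatorname{conv}(Q_0)\cap\Z^s=Q_0$; consequently $P\subseteq Z\cap\Z^s\subseteq Q_0$. Transporting $Q_0$ back through $\theta^{-1}$ and translating by $g_0$ produces a proper GAP $Q$ in $\R$ of rank at most $r$ with $P\subseteq Q$. For the size, $|Q|=|Q_0|=\prod_{j=1}^s(2M_j+1)=O_r\big(\prod_j\lambda_j^{-1}\big)$ because $\lambda_j\le 1$, and $\prod_j\lambda_j^{-1}=O_r(\Vol(Z))$ by Minkowski's theorem (as $\det\Z^s=1$); since all $\lambda_j\le 1$, $Z$ contains a lattice basis, so $\Vol(Z)=O_r(|Z\cap\Z^s|)$; and since $Z=\operatorname{conv}(P)$ is the convex hull of a rank‑$r$ GAP, a standard estimate from the theory of zonotopes gives $|Z\cap\Z^s|=O_r(|P|)$. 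Combining, $|Q|=O_r(|P|)$, and keeping crude track of the $r$‑dependence of each constant — from the reduced basis, from dilating $Z$ into a box, and from the two lattice‑point‑versus‑volume comparisons, each at worst $r^{O(r^2)}$ — yields $|Q|\le r^{Cr^3}|P|$ for an absolute constant $C$.

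The main obstacle is the quantitative geometry of numbers in the second step: one needs an honest basis of $\Z^s$ (not merely a linearly independent system) adapted to the zonotope $Z$, with explicit control of its quality in terms of $s\le r$, and one needs to compare lattice‑point counts with volumes both for $Z$ itself and between $Z\cap\Z^s$ and the GAP $P$. The first is handled by the Minkowski--Mahler reduced‑basis theorem; the second rests on the structure of zonotopes (equivalently, on the fact that a rank‑$r$ GAP occupies a positive, $r$‑dependent proportion of the lattice points of its convex hull). An alternative, more combinatorial route avoids the reduced basis entirely: using Minkowski's first theorem one extracts a single short relation $v\in\ker\psi$ and uses it to ``compress'' one dimension of $B$ at a time, in the spirit of Freiman's lemma; but then one must supply a potential‑function argument showing that the process terminates after at most $r$ steps without inflating the volume by more than $r^{Cr^3}$.
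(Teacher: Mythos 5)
The paper offers no proof of this lemma (it is imported verbatim from Tao--Vu, Theorem 3.40), so I can only judge your argument on its own merits, and it has a genuine gap. The fatal step is the one you label ``a standard estimate from the theory of zonotopes'': the claim $|Z\cap\Z^s|=O_r(|P|)$ is false, and with it the entire size accounting collapses. Take $r=2$, $g_0=0$, $g_1=N$, $g_2=1$, $N_1=N_2=1$, so that $s=1$ and $P=\{m_1N+m_2:\ |m_1|,|m_2|\le 1\}\subset\Z$ has exactly $9$ elements (it is even proper once $N\ge 3$). Then $Z=\operatorname{conv}(P)=[-N-1,N+1]$, so $|Z\cap\Z|=2N+3$ and $\Vol(Z)=2N+2$, neither of which is $O(|P|)$ nor $O\big(\prod_i(2N_i+1)\big)$. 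Since your $Q_0$ is constructed to contain \emph{all} of $Z\cap\Z^s$, it must have at least $2N+3$ elements, so the conclusion $|Q|\le r^{Cr^3}|P|$ fails for large $N$. The obstruction is structural, not a matter of constants: the Euclidean volume of the zonotope $\sum_i[-N_i,N_i]g_i$ is a sum of terms $\prod_{i\in S}(2N_i)\,|\det(g_i)_{i\in S}|$ involving the magnitudes of the generators, whereas $|P|$ is not; covering every lattice point of the convex hull of $P$ is simply too much to demand, and no reduced-basis or John-type refinement of the covering can repair this, because any $Q_0\supseteq Z\cap\Z^s$ is already too large.

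The viable route is essentially the one you relegate to a closing remark as an ``alternative'': work with the relation lattice $\Lambda=\ker\psi\subseteq\Z^r$ inside the box $B$ itself, rather than with $\Z^s$ inside the image zonotope. Properness is exactly the statement $\Lambda\cap(B-B)=\{0\}$, so when it fails one extracts a nonzero $v\in\Lambda\cap(B-B)$ (via Minkowski's theorem applied to $B-B$ and $\Lambda$, suitably saturated) and uses it to change basis in $\Z^r$, producing a GAP of rank at most $r-1$, or of the same rank with strictly smaller volume, that still contains $P$ and whose volume grows by at most an $r^{O(r^2)}$ factor per step; iterating at most $r$ times gives the $r^{Cr^3}$ loss. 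The dimensions $N_i$ — and hence $|P|$, via Minkowski's second theorem applied in $\Z^r$ relative to $B$ — control everything in that setting, precisely because the geometry of numbers is applied to the lattice whose intersection with the box witnesses improperness. As you yourself note, this route requires a termination/potential argument; that argument is the actual content of the lemma, and your main line of proof does not substitute for it.
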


\begin{proof} (of Lemma \ref{lemma:fullrank}) We shall mainly follow \cite[Section 8]{TVsing} and \cite[Section 6]{Ng-Duke}.

Suppose that $\Phi(U)$ does not have full rank, then it is contained in a hyperplane of $\R^r$. In other words, there exist integers $\alpha_1,\dots,\alpha_r$ whose common divisor is one and $\alpha_1m_1+\dots + \alpha_r m_r=0$ for all $(m_1,\dots,m_r)\in \Phi(U)$.

Without loss of generality, we assume that $\alpha_r \neq 0$. We select $w$ so that $g_r=\alpha_r w$, and consider $P'$ be the GAP generated by $g_i':=g_i-\alpha_iw$ for $1\le i \le r-1$. The new GAP $P'$ will continue to contain $U$, because we have 

\begin{align*}
g_0+m_1g_1'+\dots +m_{r-1}g_{r-1}' &=g_0+ m_1g_1+\dots+m_rg_r - w(\alpha_1m_1+\dots+\alpha_rg_r)\\
&= m_1g_1+\dots+m_rg_r
\end{align*}

for all $(m_1,\dots,m_r)\in \Phi(U)$. 

Also, note that the volume of $P'$ is $2^{r-1}(N_1'-N_1+1)\cdots (N_{r-1}'-N_{r-1}+1)$, which is less than the volume of $P$.

We next use Lemma \ref{lemma:embeding} to guarantee that $P'$ is proper without increasing the rank. 
 
Iterate the process if needed. Because we obtain a new proper GAP whose rank strictly decreases each step, the process must terminate after at most $r$ steps.
 
\end{proof}

\subsection{Rational commensurability} This section shows that if $\Bv$ is a normal vector of a hyperplane generated by vectors of bounded integral components, and if most of the components of $\Bv$ belong to a GAP, then {\it all} of the components belong to a GAP of rank 1 and of relatively small length. 

\begin{lemma}[Rational commensurability]\label{lemma:rational}
Let $v=(v_1,\dots, v_{m})$ be a vector such that all but $m^\ep$ components $v_i$ belong to a proper symmetric GAP of rank $O_{C,\ep}(1)$ and size $m^{O_{C,\ep}(1)}$, and that $v$ is a normal vector of a hyperplane spanned by vectors of integral components bounded by $m^{O_{C,\ep}(1)}$. Then $\{v_1,\dots,v_{m}\} \subset \{(p/q)v_{i_0}, |p|,|q| \le m^{O_{C,\ep}(m^\ep)}\}$ for some $i_0$. \end{lemma}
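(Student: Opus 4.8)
The plan is to exploit the two hypotheses separately: the GAP containment gives a rational linear structure among \emph{most} of the $v_i$, and the normality condition — that $v$ is orthogonal to a spanning family of integer vectors with small entries — propagates that structure to \emph{all} of the $v_i$ at a controlled cost. First I would set $m' = m^{\ep}$ and let $S \subset \{1,\dots,m\}$ be the exceptional set, $|S| \le m'$, so that $\{v_i : i \notin S\}$ lies in a proper symmetric GAP $P$ of rank $r = O_{C,\ep}(1)$ and size $m^{O_{C,\ep}(1)}$. Applying Lemma~\ref{lemma:fullrank}, I may assume the good components actually span $P$, i.e. $\Phi(\{v_i\}_{i\notin S})$ has full rank $r$ in $\R^r$, at the cost of enlarging $|P|$ by an $O_r(1)$ factor and possibly lowering the rank. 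Writing $v_i = m_1^{(i)}g_1 + \dots + m_r^{(i)}g_r$ for $i \notin S$ (the GAP is symmetric so $g_0=0$) with bounded integer coordinates $|m_j^{(i)}| \le m^{O_{C,\ep}(1)}$, the key elementary observation is that any two good components are rationally commensurable with small height: since the $m_j^{(i)}$ are integers bounded polynomially in $m$, two linear combinations $v_i, v_{i'}$ of the same $r$ generators satisfy $q v_i = p v_{i'}$ for suitable integers $|p|,|q| \le m^{O_{C,\ep}(1)}$ provided both are nonzero — this follows by solving a full-rank integer linear system (Cramer's rule, with determinants of bounded integer matrices) once we fix $r$ genuinely independent good indices and express everything in terms of a single coordinate. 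Concretely, I would fix $i_0 \notin S$ with $v_{i_0} \ne 0$, and show $v_i \in \{(p/q)v_{i_0} : |p|,|q| \le m^{O_{C,\ep}(1)}\}$ for every good $i$.

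Next I would handle the exceptional indices in $S$ using the normality hypothesis. Let $H$ be the hyperplane that $v$ is normal to, spanned by integer vectors $b^{(1)},\dots,b^{(m-1)}$ with $\|b^{(k)}\|_\infty \le m^{O_{C,\ep}(1)}$. For each $k$ we have $\sum_{i=1}^m b^{(k)}_i v_i = 0$, i.e. $\sum_{i \in S} b^{(k)}_i v_i = -\sum_{i \notin S} b^{(k)}_i v_i$. The right-hand side, by the previous paragraph, is a rational multiple of $v_{i_0}$ with numerator and denominator of size at most $m^{O_{C,\ep}(1)}$ (a sum of $m$ terms each $\frac{p_i}{q_i} v_{i_0}$ with bounded $p_i,q_i$; clearing the common denominator of the $q_i$ is the one place where the bound inflates, from $m^{O(1)}$ to $m^{O(m^\ep)}$, because there are up to $m^{\ep}$ distinct denominators coming from the combination — actually from the at most $m$ good terms, but only the $|S|\le m^\ep$-dimensional span of the left side matters). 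So the vector $(v_i)_{i\in S}$ of the $\le m'$ unknown components satisfies a system of $m-1$ integer-coefficient linear equations whose right-hand sides are all rational multiples of $v_{i_0}$ with height $m^{O_{C,\ep}(m^\ep)}$. Since $v \ne 0$ and the $b^{(k)}$ span an $(m-1)$-dimensional space, the coefficient matrix of this system restricted to the $S$-columns, together with the fact that $v$ itself is (up to scaling) the \emph{unique} solution of the full homogeneous system, forces each $v_i$, $i \in S$, to be determined by the $v_{i_0}$-component via Cramer's rule applied to an $|S|\times|S|$ (or suitably chosen square) subsystem with integer entries bounded by $m^{O(1)}$ and right-hand side of height $m^{O(m^\ep)}$; the resulting determinants have size at most $(m^{O(1)})^{|S|} = m^{O(m^\ep)}$. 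Hence $v_i = (p_i/q_i) v_{i_0}$ with $|p_i|,|q_i| \le m^{O_{C,\ep}(m^\ep)}$ for all $i$, which is exactly the claimed conclusion.

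The main obstacle I anticipate is the bookkeeping around the exceptional set: one must be careful that the integer matrix one inverts (whether to solve for the bad components, or to re-express good components in terms of a single $v_{i_0}$) is genuinely square and invertible, and that this follows from $v \ne 0$ plus the spanning property of the $b^{(k)}$. The clean way to do this is to observe that the subspace $\{x : \sum b^{(k)}_i x_i = 0 \ \forall k\}$ is one-dimensional, spanned by $v$; restricting attention to a size-$(|S|)$ or size-$r$ set of coordinates where the relevant minor is nonzero and extracting that minor by the full-rank hypothesis is routine but needs to be stated precisely. A secondary point of care is tracking exactly where the height jumps from polynomial to $m^{O(m^\ep)}$: it happens precisely once, when common denominators over an $m^\ep$-sized (or $r$-sized, times the count of distinct denominators) index set are cleared, and one should make sure it does not compound to $m^{O(m)}$ — keeping the generators $g_j$ implicit and working with ratios $v_i/v_{i_0}$ rather than the $v_i$ themselves is what keeps the count at $m^\ep$ in the exponent rather than $m$.
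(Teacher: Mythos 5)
There is a genuine gap at the first stage of your plan. You claim that because each good component is an integer combination $v_i=\sum_j m_j^{(i)}g_j$ with polynomially bounded coefficients, any two good components satisfy $qv_i=pv_{i'}$ with $|p|,|q|\le m^{O_{C,\ep}(1)}$. This is false whenever the rank is at least $2$: the generators of a GAP are arbitrary reals, so e.g.\ with $r=2$, $g_1=1$, $g_2=\sqrt2$ the good components $v_i=1$ and $v_{i'}=\sqrt2$ are not commensurable at all, even though $\Phi$ of the good set has full rank. Fixing $r$ independent good indices and solving by Cramer only expresses each good $v_i$ as a rational combination of $v_{i_1},\dots,v_{i_r}$, not as a rational multiple of a single $v_{i_0}$. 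Commensurability simply cannot be extracted from the GAP containment alone; it must come from the orthogonality to the $m-1$ integer vectors, which is precisely the hypothesis your first stage never uses. Since your second stage takes the (false) conclusion of the first stage as input — it needs $\sum_{i\notin S}b^{(k)}_i v_i$ to already be a bounded-height rational multiple of $v_{i_0}$ — the whole argument collapses rather than merely needing bookkeeping repairs. (A secondary soft spot: even granting stage one, your accounting of how the denominators of up to $m$ good terms combine is not controlled as written.)

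The paper's proof couples the two hypotheses in a single linear system instead of trying to use them sequentially. One writes $v^T=M_v u^T$, where $u=(g_1,\dots,g_r,v_{j}:j\in S)$ collects the $r+m^\ep$ unknowns and $M_v$ records the integer GAP coordinates of the good components (so $M_v$ has rank $r+m^\ep$); one then stacks the $m-1$ integer orthogonality vectors together with one unit row $e_{i_0}$ into an invertible integer matrix $M_w$, so that $(M_wM_v)\,(u/v_{i_0})^T$ equals a fixed $0$--$1$ vector. Extracting a full-rank $(r+m^\ep)\times(r+m^\ep)$ submatrix, whose entries are integers of size $m^{O_{C,\ep}(1)}$, and applying Cramer's rule gives $g_j/v_{i_0}$ and $v_j/v_{i_0}$ (for $j\in S$) as ratios of integers bounded by $m^{O_{C,\ep}(m^\ep)}$; the good components then inherit the same bound since they are short integer combinations of the $g_j$. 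If you want to salvage your outline, you must treat the generators themselves as unknowns alongside the exceptional components and solve for all of them simultaneously from the integer relations, exactly as above — that is the step your proposal is missing.
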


This is basically \cite[Lemma 9.1]{Ng-Duke}. Allow us to reprove it where for the completeness of the note. 

\begin{proof}(of Lemma \ref{lemma:rational})
Without loss of generality, we assume that $(v_{m-m^\ep},\dots, v_{m})$ are the exceptional elements that may not belong to the GAP. 

For each $v_i$, where $i< m-m^\ep$, there exist numbers $v_{ij}$ bounded by $m^{O_{C,\ep}(1)}$ such that 

$$v_i = v_{i1}g_1+\dots v_{ir}g_r,$$ 

where $g_1,\dots, g_r$ are the generators of the GAP.

Consider the $m$ by $r+m^\ep$ matrix $M_v$ whose $i$-th column is the vector $(v_{i1},\dots,v_{ir},0,\dots,0)$ if $i< m-m^\ep$, and $(0,\dots,0,1,0,\dots,0)$ if $m-m^\ep\le i$. Note that $M_v$ has rank $r+m^\ep$. 

We thus have  

$$v^T=M_v \cdot u^T,$$

where $u=(g_1,\dots,g_r,v_{m-m^\ep},\dots,v_{m})$.

Next, let $w_1,\dots,w_{m-1}$ be the vectors of integral entries bounded by $m^{O_{C,\ep}(1)}$ which are orthogonal to $v$. We form a $m$ by $m$ matrix $M_w$ whose $i$-th row is $w_i$ for $i\le n-2$, and the $m$-th row is $e_{i_0}$, the unit vector among the standard basis $\{e_1,\dots, e_{m}\}$ that is linearly independent to $w_1,\dots, w_{m-1}$. 

By definition, we have $M_w v^T = (0,\dots,0,v_{i_0},0,\dots,0)^T$, and hence 

$$(M_wM_v) u^T =(0,\dots,0,v_{i_0},0,\dots,0)^T.$$ 

The indentity above implies that

\begin{equation}\label{eqn:smallvector:1}
(M_{w}M_{v}) (\frac{1}{v_{i_0}} \cdot u)^T =(0,\dots,0,1,0,\dots,0)^T.
\end{equation}

Next we choose a submatrix $M$ of size $r+m^\ep$ by $r+m^\ep$ of $M_{w}M_{v}$ thas has full rank. Then

\begin{equation}\label{eqn:smallvector:2}
M  (\frac{1}{v_{i_0}}\cdot u)^T = x
\end{equation}

for some $x$ which a subvector of $(0,\dots,0,1,0,\dots,0)$ from \eqref{eqn:smallvector:1}.

Observe that the entries of $M$ are integers bounded by $m^{O_{C,\ep}(1)}$. Solving for $g_i/v_{i_0}$ and $v_{j}/v_{i_0}$ from \eqref{eqn:smallvector:2}, we conclude that each component can be written in the form $p/q$, where $|p|,|q| \le m^{O_{C,\ep}(m^\ep)}$. 

\end{proof}

\section{Proof of Theorem \ref{theorem:main}}\label{section:proof}

In this section we prove our main theorem. Let $\Br_1,\dots,\Br_n$ be the row vectors of $Q_n$, we will show 

\begin{theorem}\label{theorem:m}
Let $1\le m\le n-1$. Assume that $Q_n$ is a random $(0,1)$ matrix whose rows are independent vectors of exactly $n/2$ zero components. Then for any $C>0$
$$\P(\Br_1,\dots,\Br_m \mbox{ generate a subspace } \BH \mbox{ of dimension } m, \mbox { and } \Br_{m+1}\in \BH)=O(n^{-C}),$$

where the implied constant depends on $C$.
\end{theorem}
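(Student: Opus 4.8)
The plan is to condition on the first $m$ rows $\Br_1,\dots,\Br_m$ spanning an $m$-dimensional space $\BH$, and to study the event $\Br_{m+1}\in\BH$ by writing this containment via a normal vector. Since $\BH$ has codimension $n-m$ in $\R^n$, pick any nonzero $\Bv=(v_1,\dots,v_n)$ orthogonal to $\Br_1,\dots,\Br_m$; then $\Br_{m+1}\in\BH$ forces $\Br_{m+1}\cdot\Bv=0$ (when $n-m=1$ this is the cofactor-expansion picture with $v_i=a_i$; for larger codimension one works with a fixed choice of normal vector, so we reduce to the hyperplane case $m=n-1$, which is the substantive one). Writing $\Br_{m+1}=(x_1,\dots,x_n)$ with exactly $n/2$ zeros, the event becomes $\sum_i x_i v_i = 0$, so its conditional probability is at most $\rho^\ast(\{v_1,\dots,v_n\})$. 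If $\rho^\ast \le n^{-C}$ uniformly we are done; otherwise we are in the regime where the Inverse Step applies.

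First I would dispose of the degenerate sub-case where many $v_i$ coincide: if the multiset $\{v_i\}$ has $n-n'-1$ or more equal entries for $n'$ a small power of $n$, then $\BH$ contains a coordinate-difference type constraint, and one shows directly (a short counting argument on the first $m$ rows, using that each row has exactly $n/2$ ones and zeros) that this happens with probability $O(n^{-C})$; this also lets us invoke Theorem \ref{theorem:ILOb} rather than only the symmetric Theorem \ref{theorem:ILOa}. Assuming $\rho^\ast(\{v_i\})\ge n^{-C}$ and no heavy atom, Theorem \ref{theorem:ILOb} (with $n'=n^{\ep}$, say) produces a proper GAP $P$ of bounded rank $r\ge 2$ and size $|P|=O((\rho^\ast)^{-1}\sqrt n/n'^{r/2})$ containing all but $n'$ of the $v_i$. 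Then Lemma \ref{lemma:fullrank} lets me assume the retained $v_i$ actually span $P$, and Lemma \ref{lemma:rational} upgrades this: since $\Bv$ is normal to a hyperplane spanned by the $(0,1)$-vectors $\Br_1,\dots,\Br_{n-1}$ (integral, bounded by $1$), \emph{all} the $v_i$ are rational multiples $(p_i/q_i)v_{i_0}$ with $|p_i|,|q_i|\le n^{O(n^{\ep})}$ of a single coordinate $v_{i_0}$. So the normal direction is essentially a fixed integer vector with entries of controlled size, up to scaling.

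The Counting Step then estimates the probability over $\Br_1,\dots,\Br_{n-1}$ that their span admits a normal vector of this special shape. I would stratify over the possible GAP structures (the rank $r$, the generators, the dimensions) and over the rational data $(p_i,q_i)$: the number of such structures is at most $n^{O(n^{\ep})}$ because each parameter lives in a range of size $n^{O(1)}$ or $n^{O(n^{\ep})}$ and there are $O(n)$ of them. For each fixed candidate normal direction $\Bw$ (a nonzero integer vector with bounded entries, not of the trivial repeated-coordinate form), the probability that a single random row $\Br_j$ — a uniform $(0,1)$ vector with exactly $n/2$ ones — is orthogonal to $\Bw$ is itself bounded by $\rho^\ast$-type considerations: it is $\sup_a \P(\sum x_i w_i = a)$ for the constrained Bernoulli vector, which by Corollary \ref{cor:forward} is $O(\sqrt n/n') = O(n^{-1/2+\ep})$ once $\Bw$ has no large atom. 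The $n-1$ rows being independent, the probability that all of them are orthogonal to a fixed such $\Bw$ is at most $(n^{-1/2+\ep})^{n-1-o(n)}$, and multiplying by the $n^{O(n^{\ep})}$ union bound over candidate directions still leaves a bound like $\exp(-\Omega(n\log n))$, far smaller than any $n^{-C}$. One also has to handle directions $\Bw$ that do have a large atom separately — but those correspond exactly to the degenerate case already excluded, or can be absorbed by a cruder bound since a normal vector with, say, half its entries equal forces a strong linear dependence among coordinate sums of the rows, again rare.

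The main obstacle I expect is making the Counting Step's union bound genuinely summable: the rational-commensurability output has denominators as large as $n^{O(n^{\ep})}$, so the number of candidate normal directions is super-polynomial, and one must be sure the per-direction orthogonality probability decays fast enough (geometrically in $n$, with a rate beating $n^{\ep}$) to overwhelm it — this is why it is essential that each of the $n-1$ independent rows contributes a factor bounded away from $1$ by a fixed power of $n$, and why the atom-free reduction and Corollary \ref{cor:forward} are doing the real work. A secondary technical point is the bookkeeping for codimension $n-m>1$: one must either reduce cleanly to the $m=n-1$ case by noting that if $\Br_1,\dots,\Br_m$ span $\BH$ and $\Br_{m+1}\in\BH$ then in particular $\Br_1,\dots,\Br_m,\Br_{m+1}$ are linearly dependent with each $\Br_i$ a constrained $(0,1)$ vector, and bound that, or carry the normal-vector argument through with a fixed choice of $\Bv\in\BH^\perp$ throughout.
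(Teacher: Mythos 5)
Your overall architecture (reduce to a normal vector, split off a ``many equal coordinates'' case, invert via Theorem \ref{theorem:ILOb} plus Lemmas \ref{lemma:fullrank} and \ref{lemma:rational}, then count candidate normal directions and union-bound) matches the paper, but the Counting Step as you describe it has a genuine gap, and it is exactly at the point you flag as ``the main obstacle.'' The number of candidate normal vectors $\Bu$ is not $n^{O(n^\ep)}$: the GAP data (rank, generators, dimensions) and the exceptional rational coordinates contribute $n^{O(n^{1/2+2\ep})}$, but you must also choose, for each of the $\approx n$ non-exceptional coordinates, which element of $P$ it equals, contributing a factor of order $|P|^{n-o(n)}$. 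Since $|P|$ can be as large as roughly $(\rho^\ast)^{-1}$, i.e.\ up to $n^{C}$, the candidate count is of size $n^{\Theta(n)}$ with a constant depending on $C$. Against this, the per-direction probability you invoke --- each row orthogonal to $\Bw$ with probability $O(n^{-1/2+\ep})$ by Corollary \ref{cor:forward}, hence $(n^{-1/2+\ep})^{n-1}$ overall --- is not enough: the product is of order $n^{(C-1/2+O(\ep))n}$ and blows up for $C>1/2$. A fixed, $\rho^\ast$-independent per-row bound cannot work here; the paper instead stratifies by the value of $\rho^\ast(U)$ into ranges $[n^{-(k+1)\delta},n^{-k\delta}]$ and pits the level-dependent probability $(\rho^\ast(U))^{m}\le (n^{-\delta k})^{m}$ against the level-dependent count $|\BG_k|=O\big(n^{O(n^{1/2+2\ep})}(n^{\delta(k+1)}/n^\ep)^{\,n-n^{1/2+\ep}}\big)$, so that the $(\rho^\ast)^{\pm n}$ factors cancel and only the per-coordinate gain $n^{-\ep}$ survives.

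That gain is the second missing ingredient: you apply Theorem \ref{theorem:ILOb} with $n'=n^{\ep}$, which yields $|P|=O\big((\rho^\ast)^{-1}\sqrt n/n^{\ep}\big)$ and leaves an uncancelled $\sqrt n$ per coordinate, fatal to any matching of count against probability. The paper's Inverse Step takes $n'=n^{1/2+\ep}$, so that the rank-$\ge 2$ feature of Theorem \ref{theorem:ILOb} gives $|P|=O\big((\rho^\ast)^{-1}\sqrt n/n^{1/2+\ep}\big)=O\big((\rho^\ast)^{-1}n^{-\ep}\big)$; the price of $n^{1/2+\ep}$ exceptional coordinates is affordable because they only cost $n^{O(n^{1/2+2\ep})}$ in the count. (The paper also first applies Theorem \ref{theorem:ILO} with $n'=n^\ep$ solely to feed Lemma \ref{lemma:rational}.) Two further points you treat too lightly: small $m$ is handled in the paper by Claim \ref{claim:2} (codimension $\omega(\log n)$ makes $\P(\Br\in\BH)=n^{-\omega(1)}$ outright), not by reducing to $m=n-1$; and the heavy-atom case is not a one-line count --- the paper collapses the repeated columns and proves Claim \ref{claim:1} to get a $(3/4+\ep)$ per-row bound, yielding $(3/4+\ep)^{(1-o(1))n}$ after the binomial losses.
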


It is clear that Theorem \ref{theorem:m} implies Theorem \ref{theorem:main}. We next claim that its suffices to work with the case of $m$ being large, $m=n-O(\log n)$.

\begin{claim}\label{claim:2}
Let $\BH\subset \R^n$ be a subspace of dimension $m=n-\omega(\log n)$, and $\Br=(r_1,\dots,r_n)$ be a random $(0,1)$ vector of exactly $n/2$ zero components. Then we have 

$$\P(\Br\in \BH) =n^{-\omega(1)}.$$
\end{claim}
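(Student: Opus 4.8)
The plan is to bound the number of $(0,1)$ vectors with exactly $n/2$ ones lying in a low-codimensional subspace $\BH$, and divide by the total count $\binom{n}{n/2}$. Write $k = n - m = \omega(\log n)$ for the codimension. The key observation is that a generic $(k-1)$-dimensional slice of constraints forces the vector, so one should expect at most roughly $2^{n-k+O(1)}$ such vectors, which after dividing by $\binom{n}{n/2} \sim 2^n/\sqrt{n}$ gives $2^{-k + O(\log n)} = n^{-\omega(1)}$, as claimed.

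More concretely, I would argue as follows. Let $\bw_1, \dots, \bw_k$ be a basis of the orthogonal complement $\BH^\perp$. A vector $\Br$ lies in $\BH$ iff $\langle \Br, \bw_j \rangle = 0$ for all $j$. First I would handle the case where $\BH$ contains (in its orthogonal complement) a vector supported on few coordinates — i.e. the "structured" case — separately; but the cleaner route is a direct counting/exposure argument. Order the coordinates and expose $r_1, \dots, r_n$ one at a time. The idea is that each of the $k$ linear constraints, when it first becomes "active" (i.e. when the last free coordinate appearing in that constraint is exposed), cuts the number of consistent completions by a constant factor bounded away from $1$. Since the entries are $\{0,1\}$ rather than $\pm 1$, and we must respect the global condition of exactly $n/2$ ones, one should instead use the following standard device: condition on the positions of all but a carefully chosen subset and apply a Littlewood–Offord type bound, or more simply, embed the problem into the unrestricted Bernoulli cube. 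Specifically, if $N_{\BH}$ denotes the number of $(0,1)$ vectors with exactly $n/2$ ones in $\BH$, then picking any $k$ coordinates on which the projection of $\BH^\perp$ is nondegenerate, at most $2^{n-k}$ choices of the remaining $n-k$ coordinates extend to a vector in $\BH$, and each determines at most $O(1)$ — in fact, at most a bounded number via the nondegenerate $k \times k$ system — completions; hence $N_{\BH} = O(2^{n-k})$. Then
\[
\P(\Br \in \BH) = \frac{N_{\BH}}{\binom{n}{n/2}} = O\!\left(\frac{2^{n-k} \sqrt{n}}{2^n}\right) = O(\sqrt{n}\, 2^{-k}) = n^{-\omega(1)},
\]
using $k = \omega(\log n)$.

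The subtlety — and the main obstacle — is that the $k \times k$ minor of a basis of $\BH^\perp$ need not be invertible over $\Z/2$ or even have small "content," so it is not automatic that a choice of $n-k$ free coordinates has only $O(1)$ completions to a lattice point of $\BH$; over $\R$ the $k \times k$ system has a unique solution, but that solution need not be $(0,1)$-valued, which actually helps (it means at most one completion per free assignment), giving directly $N_{\BH} \le 2^{n-k}$. So the real content is just choosing $k$ coordinates on which $\BH^\perp$ projects isomorphically onto $\R^k$ — which exists since $\dim \BH^\perp = k$ — and observing that over those coordinates the remaining values are then uniquely determined by linear algebra. I would therefore structure the proof as: (i) fix a basis of $\BH^\perp$ and extract a set $S$ of $k$ coordinates with the projection to $S$ of rank $k$; (ii) note each of the $2^{n-k}$ assignments on $[n]\setminus S$ has at most one extension in $\BH$; (iii) divide by $\binom{n}{n/2}$ and use $\binom{n}{n/2} = \Theta(2^n/\sqrt{n})$ together with $k = \omega(\log n)$ to conclude $\P(\Br \in \BH) = O(\sqrt n\, 2^{-k}) = n^{-\omega(1)}$. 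No structural/inverse Littlewood–Offord input is needed here; this claim is the "easy" reduction that lets the rest of the paper assume $m = n - O(\log n)$.
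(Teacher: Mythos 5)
Your proposal is correct and takes essentially the same route as the paper: the paper also counts at most $2^m=2^{n-k}$ lattice points of $\BH$ by observing that $m$ suitably chosen coordinates determine the remaining ones (the dual formulation of your full-rank $k\times k$ minor of a basis of $\BH^\perp$), and then divides by $\binom{n}{n/2}=\Theta(2^n/\sqrt{n})$ to get $n^{-\omega(1)}$. The preliminary detours (exposure argument, Littlewood--Offord input) are, as you yourself conclude, unnecessary.
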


\begin{proof}(of Claim \ref{claim:2})
Because $\Br\in \BH$, there exist $m$ components, say $r_{i1},\dots,r_{im}$, such that for any $d \notin\{i_1,\dots,i_m\}$, there exist numbers $\alpha_{d1},\dots,\alpha_{dm}$ depending on $\BH$ such that 

$$r_d=\sum_{j=1}^m \alpha_{dj}r_{i_j}.$$ 

Thus we can bound the probability $\P(\Br\in \BH)$ by

$$\P(\Br\in \BH)\le \sum_{\max(0,m-n/2)\le t\le \min(m,n/2)}\binom{m}{t}/\binom{n}{n/2} \le 2^{m}/\binom{n}{n/2}.$$

As $m=n-\omega(\log n)$, it is clear that $2^m= 2^n/n^{\omega(1)}$,  and so 

$$\P(\Br\in \BH) =n^{-\omega(1)}.$$

\end{proof}

Because $\BH$, the subspace generated by $\Br_1,\dots,\Br_m$, has rank $m$, there is a nonzero vector $\Ba=(a_1,\dots,a_{m+1},0,\dots,0)$ which is orthogonal to $H$ and $a_i=0$ for $m+2\le i\le n$. Thus, condition on $\Br_1,\dots,\Br_m$, the probability that the next row $\Br_{m+1}=(x_1,\dots,x_n)$ belongs to $\BH$ is bounded by $\rho^\ast(A)=\P(x_1a_1+\dots+x_na_n=0|\Br_1,\dots,\Br_m)$, where $A=\{a_1,\dots,a_n\}$. 

For Theorem \ref{theorem:m}, there is nothing to prove if $\rho^\ast(A)\le n^{-C}$. Assume otherwise, we will apply the following structural result. 

\begin{lemma}[Inverse Step]\label{lemma:m}
Assume that  

$$\rho^\ast(A)=\P(x_1a_1+\dots+x_na_n=0|\Br_1,\dots,\Br_m)\ge n^{-C}.$$

Then there exists a nonzero vector $\Bu=(u_1,\dots,u_n)$ which satisfies the following properties.

\begin{itemize}
\item $\Bu$ is orthogonal to the rows $\Br_1,\dots,\Br_m$, and $\rho^\ast(U)=\rho^\ast(A)$, where $U=\{u_1,\dots,u_n\}$.
\vskip .1in
\item Every $u_i$ can be written in the form $p/q$, where $p,q$ are integers and $|p|,|q| \le n^{O_{C,\ep}(n^\ep)}$.
\vskip .1in
\item Either there are $n-n^{1/2+\ep}$ components $u_i$ that take the same value or all but $n^{1/2+\ep}$ of $u_i$ (counting multiplicity) belong to a GAP of rank $O_{C,\ep}(1)$ and of size $O_{C,\ep}\big((\rho^\ast(U))^{-1}/n^{\ep}\big)$, and $\rho^\ast(U)=O_{\ep}(n^{-\ep})$.
\end{itemize}
\end{lemma}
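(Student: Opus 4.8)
The plan is to combine the inverse Littlewood-Offord results of Section 2 with the structural lemmas of Section 3, arguing in three stages.

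First, the \textbf{reduction to a rational near-GAP structure.} Starting from the hypothesis $\rho^\ast(A) \ge n^{-C}$, apply Theorem \ref{theorem:ILOa} with $n' = n^{1/2+\ep}$: either there are more than $n - n' - 1 = n - n^{1/2+\ep} - 1$ components of $A$ taking the same value (the first alternative of the third bullet — modulo cleaning up the exact threshold), or we may invoke Theorem \ref{theorem:ILOb} to get a proper GAP $P$ of rank $r = O_{C,\ep}(1)$, $r \ge 2$, containing all but $n^{1/2+\ep}$ elements of $A$, with $|P| = O_{C,\ep}\big((\rho^\ast(A))^{-1}\sqrt{n}/n'^{r/2}\big)$. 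Since $r \ge 2$ and $n' = n^{1/2+\ep}$ this gives $|P| = O_{C,\ep}\big((\rho^\ast(A))^{-1}/n^{\ep}\big)$, which is the size bound asserted. Corollary \ref{cor:forward} (with the same $n'$) gives $\rho^\ast(A) = O_\ep(\sqrt{n}/n^{1/2+\ep}) = O_\ep(n^{-\ep})$ in this branch. Then apply Lemma \ref{lemma:fullrank} to replace $P$ by a proper GAP that $A$ (the non-exceptional part) \emph{spans}, at the cost of only an $O_r(1)$ factor in size and without increasing the rank.

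Second, \textbf{producing the vector $\Bu$ and controlling its entries.} The vector $\Ba = (a_1,\dots,a_{m+1},0,\dots,0)$ is a normal vector to the hyperplane $\BH = \mathrm{span}(\Br_1,\dots,\Br_m)$, and $\BH$ is spanned by the integral vectors $\Br_1,\dots,\Br_m$ whose entries are bounded by $1 = m^{O(1)}$. The set $A = \{a_1,\dots,a_n\}$ has all but $n^{1/2+\ep}$ of its elements inside a proper symmetric GAP of bounded rank and polynomial size (after symmetrizing via Theorem \ref{theorem:ILOa}, or directly). Hence Lemma \ref{lemma:rational} applies with $m$ replaced by $n$ and $\ep$ adjusted: all components $a_i$ lie in $\{(p/q)\,a_{i_0} : |p|,|q| \le n^{O_{C,\ep}(n^\ep)}\}$ for some index $i_0$. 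Set $\Bu := (a/a_{i_0})\,\Ba$ — a nonzero rescaling of $\Ba$, hence still orthogonal to $\Br_1,\dots,\Br_m$, with $U = \{u_1,\dots,u_n\}$ a dilate of $A$, so $\rho^\ast(U) = \rho^\ast(A)$ and the GAP structure of $A$ transfers to a GAP structure of $U$ of the same rank and size (dilation is a GAP isomorphism). The entries $u_i = (a_i/a_{i_0})$ are then exactly of the form $p/q$ with $|p|,|q| \le n^{O_{C,\ep}(n^\ep)}$, giving the second bullet. The third bullet's size bound $O_{C,\ep}\big((\rho^\ast(U))^{-1}/n^\ep\big)$ follows from the $|P|$ bound above together with $\rho^\ast(U)=\rho^\ast(A)$, and in the GAP branch we also carry along $\rho^\ast(U) = O_\ep(n^{-\ep})$ from Corollary \ref{cor:forward}.

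I expect the \textbf{main obstacle} to be the bookkeeping around the symmetric-versus-non-symmetric GAP distinction and making Lemma \ref{lemma:rational} applicable: that lemma is stated for a vector most of whose components lie in a \emph{symmetric} proper GAP, whereas Theorem \ref{theorem:ILOb} produces a possibly non-symmetric one with the good size bound. The resolution is to run Theorem \ref{theorem:ILOa} (symmetric, rank $\ge 2$ after noting the rank-$1$ obstruction is ruled out exactly as in the proof of Theorem \ref{theorem:ILOb}) to feed Lemma \ref{lemma:rational}, while noting the size bound from the $r\ge 2$ gain is identical; one must also check that the "same value" alternative and the threshold $n-n^{1/2+\ep}$ match up when $A$ has a large constant block, since then the Littlewood-Offord hypotheses of Theorem \ref{theorem:ILOb} and Corollary \ref{cor:forward} fail and one lands in the first alternative by default. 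Finally one should verify that $a_{i_0} \neq 0$ so that the rescaling is legitimate — this holds because $\Ba \neq 0$, and if $a_{i_0} = 0$ one picks a different pivot index, adjusting the conclusion of Lemma \ref{lemma:rational} accordingly.
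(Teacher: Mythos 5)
Your overall strategy is close in spirit to the paper's, but there is a genuine gap in how you obtain the second bullet, the height bound $|p|,|q|\le n^{O_{C,\ep}(n^\ep)}$. You apply the inverse theorems once, with $n'=n^{1/2+\ep}$, and then feed the resulting structure (all but $n^{1/2+\ep}$ elements in a polynomially sized GAP) into Lemma \ref{lemma:rational} ``with $\ep$ adjusted''. But the height bound in Lemma \ref{lemma:rational} scales with the number of exceptional components: its proof solves a linear system whose matrix has size $r+k$ by $r+k$, where $k$ is the number of exceptional components, and Cramer's rule only yields heights of order $n^{O(k)}$. With $k=n^{1/2+\ep}$ you therefore only get $|p|,|q|\le n^{O_{C,\ep}(n^{1/2+\ep})}$, not $n^{O_{C,\ep}(n^\ep)}$. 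This loss is not cosmetic: in the Counting Step the exceptional coordinates of $\Bu$ are enumerated from the set of admissible rationals at a cost of $\big(n^{O_{C,\ep}(n^\ep)}\big)^{n^{1/2+\ep}}=n^{O_{C,\ep}(n^{1/2+2\ep})}$, and with your weaker bound this cost becomes $n^{O_{C,\ep}(n^{1+2\ep})}$, which swamps the gain of roughly $(n^{-\ep})^{(1-o(1))n}$ and breaks the proof of Theorem \ref{theorem:m}.

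The paper avoids this by two separate applications of the inverse theorems, which is the key idea your proposal misses. First, Theorem \ref{theorem:ILO} is applied with $n'=n^{\ep}$, producing a symmetric GAP of rank $O_{C,\ep}(1)$ and polynomial size containing all but only $n^{\ep}$ elements of $A$; this, and only this, is fed into Lemma \ref{lemma:rational}, which then legitimately yields heights $n^{O_{C,\ep}(n^\ep)}$ and allows one to define $\Bu=\frac{1}{a_{i_0}}\Ba$. Only afterwards, applied to the rescaled multiset $U$, comes the dichotomy: either $n-n^{1/2+\ep}$ components of $\Bu$ coincide, or Corollary \ref{cor:forward} gives $\rho^\ast(U)=O_\ep(n^{-\ep})$ and Theorem \ref{theorem:ILOb} with $n'=n^{1/2+\ep}$ gives the rank-at-least-two GAP of size $O_{C,\ep}\big((\rho^\ast(U))^{-1}/n^{\ep}\big)$ of the third bullet. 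Your remaining points (the $r\ge 2$ gain, the symmetric-versus-nonsymmetric bookkeeping, that rescaling preserves orthogonality and $\rho^\ast$, and the nonzero pivot $a_{i_0}$) are fine; the fix is to decouple the two uses of the inverse theorem as above rather than making a single application with $n'=n^{1/2+\ep}$ do double duty.
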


Assume Lemma \ref{lemma:m} for the moment, we next proceed to the Counting Step to complete the proof of the main result of this section.

\begin{proof}(of Theorem \ref{theorem:m}) We will consider two cases, depending on the structure of $\Bu$.

{\bf Case 1}. We first consider the probability $\P^\sharp$ of the event that the first $m$ rows of $Q_{n-1}$ are orthogonal to a nonzero vector $\Bu$ for which there are $n_0:=n-n^{1/2+\ep}$ components $u_i$ taking the same value. 

By paying a factor $\binom{n}{n_0}$ in probability, we may assume that $u_1=\dots=u_{n_0}$. Thus 

$$\Bu=(u_1,\dots,u_1,u_{n_0+1},\dots,u_n).$$ 

Let $Q_m$ be the matrix of the first $m$ rows of $Q_n$, and let $Q$ be the $m$ by $n-n_0$ matrix whose first column is the sum of $Q_{m}$'s first $n_0$ columns, and $Q$'s $i$-th column is $Q_{m}$'s $(n_0+i)$-th column for other $i$. By definition, the row vectors of $Q$ are orthogonal to the nonzero vector $\Bu'=(u_1,u_{n_0+1},\dots,u_n)$. Thus $Q$ has rank at most $n-n_0-1$. 

We now bound the probability of the event that $Q$ has rank at most $n-n_0-1$. By paying another factor of $\binom{n-1}{n-n_0-1}$ in probability, we may assume that any row of index at least $n-n_0$ belongs the subspace $\BH$ generated by the first $n-n_0-1$ rows of $Q$. We will rely on the following simple claim.

\begin{claim}\label{claim:1} Let $\ep<1/4$ be a fixed constant. Let $\BH$ be a hyperplane in $\R^{n-n_0}$, and $\Bx=(x_1,\dots,x_{n-n_0})$ be a random vector such that 

$$\P(x_1=k)=\binom{n_0}{k}\binom{n-n_0}{n/2-k}/\binom{n}{n/2} \mbox{ for } 0\le k\le n/2,$$ 

and $(x_2,\dots,x_{n-n_0})$ are chosen uniformly from all $(0,1)$ vectors of exactly $n/2-x_1$ unit components. Then, as $n$ is sufficiently large, one has 

$$\P(\Bx\in \BH)\le 3/4+\ep.$$   
\end{claim}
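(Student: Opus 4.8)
The plan is to show that $\Bx$ cannot be concentrated on a single hyperplane $\BH$ by producing, with probability bounded away from $0$ by a definite amount, a pair (or triple) of configurations of $\Bx$ that cannot all lie on $\BH$. The cleanest route is a \emph{swapping} argument on the coordinates $x_2,\dots,x_{n-n_0}$. Write $\BH = \{\Bz : \sum_i c_i z_i = c\}$ for some normal vector $(c_1,\dots,c_{n-n_0})$; since $\BH$ is a hyperplane not all $c_i$ are zero, and among the coordinates $2,\dots,n-n_0$ there are at least two indices, say $j$ and $k$, with $c_j \ne c_k$ (if all of $c_2,\dots,c_{n-n_0}$ were equal, then after absorbing that common value one reduces to a constraint only on $x_1$, and one handles this degenerate case separately using the spread of the distribution of $x_1$ given by the hypergeometric-type weights).

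First I would condition on the value of $x_1$ and on the \emph{multiset} of values $\{x_2,\dots,x_{n-n_0}\}$, i.e.\ on the number $s$ of unit components among $x_2,\dots,x_{n-n_0}$; conditionally, $(x_2,\dots,x_{n-n_0})$ is uniform over all $\binom{n-n_0-1}{s}$ placements. For a fixed such configuration lying in $\BH$, swapping the entries in positions $j$ and $k$ changes $\sum c_i x_i$ by $(c_j-c_k)(x_k-x_j) \ne 0$ whenever exactly one of $x_j,x_k$ equals $1$; hence the swapped configuration is \emph{not} in $\BH$. Since, conditionally on $s$ with $1\le s\le n-n_0-2$, the fraction of placements with $\{x_j,x_k\}=\{0,1\}$ is $\frac{2s(n-n_0-1-s)}{(n-n_0-1)(n-n_0-2)}$, and the swap is an involution on this set that maps $\BH$-configurations off $\BH$, at least half of those placements lie outside $\BH$. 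One then needs a lower bound, uniform in the relevant range, on the probability that $s$ (equivalently $x_1$) falls in a range where $\frac{2s(n-n_0-1-s)}{(n-n_0-1)(n-n_0-2)}$ is bounded below by a constant: because $n_0 = n - n^{1/2+\ep}$, we have $n-n_0 = n^{1/2+\ep}$, the mean of $x_1$ is about $(n/2)\cdot n_0/n \approx n/2$, and the hypergeometric-type weights put $\Omega(1)$ mass on $x_1$ being at distance $\Theta(n^{1/2+\ep})$ below $n/2$, which forces $s = n/2 - x_1$ to be of order $n^{1/2+\ep} = \Theta(n-n_0)$, i.e.\ a constant fraction of $n-n_0$, with constant probability. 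Combining, a constant fraction $\delta>0$ of the total probability mass sits on configurations that, after the fixed swap, leave $\BH$; pairing each such configuration with its swap partner shows $\P(\Bx\in\BH)\le 1-\delta/2$, and a careful accounting gives $\delta/2 \ge 1/4 - \ep$, hence $\P(\Bx\in\BH)\le 3/4+\ep$.

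The main obstacle I anticipate is the quantitative bookkeeping in getting the constant down to exactly $3/4+\ep$: the swap argument as stated naively only kills half of a \emph{sub-population}, so one must be careful to (i) choose $j,k$ with $c_j\ne c_k$ optimally (or average over all such pairs to boost the fraction of ``mixed'' pairs toward $1/2$), (ii) control the degenerate regime where $s\in\{0,1,n-n_0-1\}$ or where $x_1$ is atypically large, and (iii) handle separately the case in which $c_2=\dots=c_{n-n_0}$, where instead the variability must come entirely from $x_1$ and one invokes that no single value of $x_1$ carries more than, say, $O(n^{-1/2-\ep})$ of the mass (so in fact $\P(\Bx\in\BH)=o(1)$ in that case, comfortably below $3/4+\ep$). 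A secondary technical point is verifying the claimed $\Omega(1)$ lower bound on the mass of $x_1$ in the window $[n/2 - c\,n^{1/2+\ep},\, n/2 - c'\,n^{1/2+\ep}]$; this is a routine large-deviation / local-limit estimate for the distribution $\P(x_1=k)=\binom{n_0}{k}\binom{n-n_0}{n/2-k}/\binom{n}{n/2}$, which is exactly a hypergeometric distribution and whose standard deviation is $\Theta(n^{1/2+\ep/2})$ — I would cite a standard concentration bound rather than reprove it. Once these pieces are in place, the inequality $\P(\Bx\in\BH)\le 3/4+\ep$ follows for all sufficiently large $n$.
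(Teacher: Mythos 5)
Your proposal follows essentially the same route as the paper's proof: split according to whether the normal vector has two unequal entries among the coordinates $2,\dots,n-n_0$ (in which case the paper pairs the two completions $(0,1)\leftrightarrow(1,0)$ at those positions, which is exactly your swap involution, and uses that the ``mixed'' event has probability close to $1/2$ to get $1-\tfrac12(\tfrac12-2\ep)=3/4+\ep$), versus the degenerate case $c_2=\dots=c_{n-n_0}$, where both arguments reduce to the fact that no single value of $x_1$ carries much mass. The only discrepancies are numerical and harmless: since $n-n_0=n^{1/2+\ep}$, the hypergeometric $x_1$ has standard deviation $\Theta(n^{1/4+\ep/2})$, not $\Theta(n^{1/2+\ep/2})$, so its maximal point mass is $O(n^{-1/4-\ep/2})$ rather than $O(n^{-1/2-\ep})$ --- still $o(1)$, which is all your degenerate case (and the paper's bound $\le 3/4$) requires.
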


\begin{proof}(of Claim \ref{claim:1})
Let $\Bh=(h_1,\dots,h_{n-n_0})$ be the normal vector of $\BH$. We first assume that there exists $2\le i_0<j_0\le n-n_0$ such that $h_{i_0}\neq h_{j_0}$. Without loss of generality, assume furthermore that $i_0=n-n_0-1$ and $j_0=n-n_0$. It then follows that for any chosen tuple $(f_1,\dots,f_{n-n_0-2})$, either $(f_1,\dots,f_{n-n_0-2},0,1)$ or $(f_1,\dots,f_{n-n_0-2},1,0)$ does not belong to $\BH$. On the other hand, it is follows from the distribution of $\Bx$ that the probability $\P_\Bx(x_1+\dots+x_{n-n_0-2}=n/2-1)$ is at least $1/2-2\ep$. So we have 

$$\P(\Bx\in \BH)\le 3/4+\ep.$$

Now assume that $h_2=\dots=h_{n-n_0}=h$. In this case, $\Bx \in \BH$ if and only if $x_1h_1+h(n/2-x_1)=0$. On the other hand, by the distribution of $x_1$, it is clear that $\P(x_1=k)\le 3/4$ for any $k$, so one also has $\P(\Bx\in \BH)<3/4$ in this case.
\end{proof}

By Claim \ref{claim:1}, the probability that all rows of $Q$ of index at least $n-n_0$ belong the subspace $\BH$ generated by $Q$'s first $n-n_0-1$ rows is bounded by 

$$(3/4+\ep)^{n_0}.$$

Putting everything together, we obtain the following bound for $\P^\sharp$

$$\P^\sharp \le \binom{n}{n_0}\binom{n-1}{n-n_0-1}(3/4+\ep)^{n_0}= (3/4+\ep)^{(1-o(1))n}.$$

{\bf Case 2}. We consider the probability $\P^\flat$ of the event that the first $m$ rows of $Q_{n}$ are orthogonal to a nonzero vector $\Bu$ for which the following properties hold.

\begin{itemize}
\item $n^{-C}\le \rho^\ast(U)=O_\ep(n^{-\ep})$.
\vskip .1in
\item Every $u_i$ can be written in the form $p/q$, where $|p|,|q| \le n^{O_{C,\ep}(n^\ep)}$.
\vskip .1in
\item All but $n^{1/2+\ep}$ of the $u_i$'s belong to a GAP of rank $O_{C,\ep}(1)$ and of size $O\big((\rho^\ast(U))^{-1}/n^{\ep}\big)$.
\end{itemize}

Let $0<\delta$ to be chosen. We divide the interval $[n^{-C},O_\ep(n^{-\ep})]$ into sub-intervals $[n^{-(k+1)\delta},n^{-k\delta}]$, where $\ep/\delta \le k\le C/\delta$.  For each $k$, let $\BG_k$ be the collection of $\Bu$'s such that $\rho^\ast(U)\in [n^{-(k+1)\delta},n^{-k\delta}]$, and let $\P_k$ be the probability that the first $m$ rows of $Q_{n-1}$ are orthogonal to one of $\Bu$ from $G_k$. 

We now bound the size of $\BG_k$. To do this, we first count the number of GAPs which may contain most of the components of vectors $\Bu$ from $\BG_k$, and then count the number of $\Bu$'s whose components are chosen from the given structures. By Lemma \ref{lemma:fullrank}, one can assume that all the GAP generators are of the form $p/q$, where $|p|,|q|\le (n^{O_{C,\ep}(n^\ep)})^{O_{C,\ep}(1)}=n^{O_{C,\ep}(n^\ep)}$. Because each GAP has rank $O_{C,\ep}(1)$ and size $O((\rho^\ast)^{-1}/n^\ep)=O(n^{\delta(k+1)}/n^\ep)$, the number of such GAPs is bounded by 

$$(n^{O_{C,\ep}(n^\ep)})^{O_{C,\ep}(1)} (n^{\delta(k+1)}/n^\ep)^{O_{C,\ep}(1)}= O(n^{O_{C,\ep}(n^\ep)}).$$

After choosing a GAP of size $O(n^{\delta(k+1)}/n^\ep)$, the number of ways to choose $n-n^{1/2+\ep}$ of the $u_i$'s as its elements is

$$\binom{n}{n^{1/2+\ep}}\binom{O(n^{\delta(k+1)}/n^\ep)}{n-n^{1/2+\ep}}=O\big(n^{n^{1/2+\ep}}(n^{\delta(k+1)}/n^\ep)^{n-n^{1/2+\ep}}\big).$$

For the remaining $n^{1/2+\ep}$ exceptional elements, there are $(n^{O_{C,\ep}(n^\ep)})^{n^{1/2+\ep}}=n^{O_{C,\ep}(n^{1/2+2\ep})}$ ways to choose them from the set  $\{p/q, |p|,|q|\le n^{O_{C,\ep}(n^\ep)})\}$. 

Putting these bounds together, we obtain the following bound for the number of $\Bu$ of $\BG_k$

$$|\BG_k|=O\big(n^{O_{C,\ep}(n^{1/2+2\ep})}(n^{\delta(k+1)}/n^\ep)^{n-n^{1/2+\ep}}\big).$$

Now, for given $\Bu\in \BG_k$, the probability that the first $m$ rows of $Q_{n-1}$ are orthogonal to $\Bu$ is $(\rho^\ast(U))^m\le (n^{-\delta k})^m$. Thus we can estimate $\P_k$ as 

$$\P_k \le |\BG_k|(n^{-\delta k})^{m} =O\big(n^{O_{C,\ep}(n^{1/2+2\ep})} (n^\delta)^{n}/(n^\ep)^{n-n^{1/2+\ep}}\big)=o(n^{-\ep n/2}),$$

provided that $\delta$ was chosen to be smaller than $\ep/3$.

Summing over $k$, we thus obtain

$$\P^\flat=\sum_{k\le C/\delta} \P_k= o(n^{-\ep n/2}).$$

\end{proof}

It remains to justify the Inverse Step.

\begin{proof}(of Lemma \ref{lemma:m}) Let $A$ denote the multiset $\{a_1,\dots,a_n\}$. As $\rho^\ast(A) \ge n^{-C}$, we have  

$$\rho(A)=\Omega(\rho^\ast(A)/\sqrt{n})= \Omega(n^{-C+1/2}).$$ 

We first apply Theorem \ref{theorem:ILO} to the multiset $A=\{a_1,\dots,a_n\}$ with $n'=n^\ep$ to obtain a GAP of rank $O_{C,\ep}(1)$ and size $n^{O_{C,\ep}(1)}$ that contains all but $n^\ep$ elements of $A$. Next, because $\Ba=(a_1,\dots,a_{m+1})$ is orthogonal to the rows of $Q_{m}$, the matrix of rank $m$ generated by the first $m$ rows of $Q$, we infer from Lemma \ref{lemma:rational} that all of the $a_i$'s have the form $(p/q)\cdot a_{i_0}$ with some integers $|p|,|q| \le n^{O_{C,\ep}(n^\ep)}$ and with some fixed $i_0$.

Set 

$$\Bu=(u_1,\dots,u_n):=\frac{1}{a_{i_0}}\cdot \Ba.$$ 

It is clear that the nonzero vector $\Bu$ is orthogonal to $\Br_1,\dots,\Br_m$, and $\rho^\ast(\{u_1,\dots,u_n\})=\rho^\ast(A)$. We next consider two cases. 

{\bf Case 1.} If there are more than $n-n^{1/2+\ep}$ components $u_i$ which take the same value, then there is nothing to prove.

{\bf Case 2.} If this is not the case, then by Corollary \ref{cor:forward} we have $\rho^\ast(U)=O(n^{-\ep})$. Next, because $\rho^\ast(U)\ge n^{-C}$, we apply Theorem \ref{theorem:ILOb} to the multiset $U=\{u_1,\dots,u_n\}$ with $n'=n^{1/2+\ep}$ to obtain a GAP of rank $2\le r=O_{C,\ep}(1)$ and size 

$$|P|=O_{C,\ep}(\rho^\ast \sqrt{n}/n')=O_{C,\ep}\big((\rho^\ast(U))^{-1}/n^{\ep}\big).$$ 

\end{proof}

\end{document}